\newtheorem{theorem}{Theorem}[section]
\newtheorem{lemma}[theorem]{Lemma}
\theoremstyle{definition}
\newtheorem{example}{Example}
\newtheorem{remark}{Remark}
\begin{document}
	
\title{The Euler Characteristic of Finite Categories}
\author{Mustafa Akkaya}
\address{\vbox{\hbox{Department of Mathematics, Bilkent University} \hbox{Çankaya, Ankara, Turkey}}}
\email{mustafa.akkaya@bilkent.edu.tr}

\author{\"Ozg\"un \"Unl\"u}
\address{\vbox{\hbox{Department of Mathematics, Bilkent University} \hbox{Çankaya, Ankara, Turkey}}}
\email{unluo@fen.bilkent.edu.tr}

\maketitle

\begin{abstract}
We associate a rational number $\chi(\mathcal{A})$ to every category $\mathcal{A}$ whose object and morphism sets are finite. We show that the assignment $\chi $ is additive under disjoint union and it preserves products. Hence we consider $\chi $ as an Euler measure on a family of categories where this assignment also obeys a version of the inclusion–exclusion principle.
\end{abstract}

\section{Introduction}
We associate a rational number $\chi(\mathcal{A})$ to every category $\mathcal{A}$ whose object and morphism sets are finite. We show that the assignment $\chi $ is additive under disjoint union (Theorem \ref{thm:disjoint}) and it preserves products (Theorem \ref{thm:product}). Hence, as in \cite{propparxiv} and \cite{propp}, we call $\chi(\mathcal{C})$ the Euler measure of $\mathcal{C}$ for a family of categories where this assignment also obeys a version of the inclusion–exclusion principle (Theorem \ref{thm:inc_exc}).

Let  $\chi_{Lein}$ be the Euler characteristic defined by Leinster in \cite{leinster}. We show (Theorem \ref{thm:ext_of_Lein}) that $\chi $ extends $\chi_{Lein}$ to all categories.  More explicitly, to define $\chi_{Lein}$ Leinster requires the existence of both a weighting and a coweighting  on the adjacency matrix of the directed multi-graph associated to the category. Here we could define $\chi $ of a finite category without any condition. However, to prove that $\chi $ is invariant under categorical adjunctions (Theorem \ref{thm:equiv_of_cat}), we need to put the same conditions as Leinster. Moreover in Example \ref{EX2}, we show that these conditions cannot be eliminated. Hence we call $\chi $ an Euler characteristic only when it agrees with $\chi_{Lein}$. 

Leinster's formula (reproved here as Theorem \ref{thm:inc_exc_with_coweighting}) for computing the Euler characteristic of the Grothendieck construction of a diagram of categories requires the existence of a coweighting on the matrix associated with the Grothendieck construction. Here we show that this condition can be eliminated under some extra conditions on the diagram of categories. As a result we can consider $\chi $ as an Euler measure on a larger family of categories closed under Grothendieck constructions over posets. However, in Example \ref{EX3} we show that the poset condition cannot be eliminated.

In Section \ref{sec:appendix}, we discuss a method for computing $\chi $ to prove that it always assigns a rational number to every finite category.

Here, we also note that Stephanie Chen and Juan Pablo Vigneaux independently came upon the idea of using the Moore-Penrose inverse to study categorical magnitude in \cite{chen-vigneaux} at about the same time as us.


\section{Definition of Euler Characteristic}

All matrices in this paper have entries in $\mathbb{Q}$. For an $m\times n$-matrix $M$, let $M^+$ be the Moore–Penrose inverse of $M$ and $M^*$ be the transpose of $M$.

\subsection{Properties of Moore-Penrose inverse}
In this subsection we list some well known properties of Moore-Penrose inverse that we use throughout the paper.
Let $M$ be an $m\times n$-matrix. It is known (see \cite{moore}, \cite{penrose}, and \cite{israel}) that the \textbf{Moore–Penrose inverse} of the $m\times n$-matrix $M$ is the unique $n\times m$-matrix $M^+$ satisfying the following four equations:
\begin{equation}\label{eqn:1}MM^+M=M\end{equation}
and
\begin{equation}\label{eqn:2}M^{+}MM^{+}=M^{+}\end{equation}
and
\begin{equation}\label{eqn:3}(M^+M)^*=M^+M\end{equation}
and
\begin{equation}\label{eqn:4}(MM^+)^*=MM^+.\end{equation}

For an $m\times n$-matrix $M$, the \textbf{row space} $\mathrm{Row}(M)$ of $M$ is the subspace of $1\times n$-matrices spanned by rows of $M$. Considering $M$ as a linear function from the vector space of $n\times 1$-matrices to the vector space of $m\times 1$-matrices, the \textbf{null space} $\mathrm{Null}(M)$ of $M$ is the kernel of $M$  and the \textbf{image} $\mathrm{im}(M)$ of $M$  is its image. For a subspace $V$ of the vector space of $m\times 1$-matrices, \textbf{projection map} $\mathrm{Proj}_{V}$ \textbf{onto} $V$ is the orthogonal projection from the vector space of $m\times 1$-matrices onto $V$ where we consider the standard inner product on the vector space of $m\times 1$-matrices. For a subspace $V$ of the vector space of $m\times 1$-matrices. Let $V^{\perp}$ denote the orthogonal complement of $V$.
\begin{lemma}\label{lemma_about_moore_penrose_inverse}
	Let $x$ be an $m\times 1$-matrix, $y$  an $n\times 1$-matrix, and $M$ an $m\times m $-matrix. Then $M^+x=y$ if and only if $y^*\in Row(M)$ and $My=\mathrm{Proj}_{im(M)}(x)$.
\end{lemma}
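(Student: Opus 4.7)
The plan is to use the four Moore-Penrose equations to identify $MM^+$ and $M^+M$ as orthogonal projections, and then to argue that the two stated conditions pin down a unique vector which must be $M^+x$.

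First I would record the two projection identities. The matrix $MM^+$ is idempotent by (\ref{eqn:1}), symmetric by (\ref{eqn:4}), has image contained in $\mathrm{im}(M)$ trivially, and acts as the identity on $\mathrm{im}(M)$ since $MM^+(Mv)=Mv$ by (\ref{eqn:1}); hence $MM^+=\mathrm{Proj}_{\mathrm{im}(M)}$. By the symmetric argument using (\ref{eqn:2}) and (\ref{eqn:3}), $M^+M=\mathrm{Proj}_{\mathrm{im}(M^*)}$. From (\ref{eqn:2}) we get $\mathrm{im}(M^+)\subseteq \mathrm{im}(M^+M)=\mathrm{im}(M^*)$, and the reverse inclusion follows by writing $M^*=(MM^+M)^*=M^*(MM^+)^*$ and invoking (\ref{eqn:4}); so $\mathrm{im}(M^+)=\mathrm{im}(M^*)$, which is exactly $\{y : y^*\in \mathrm{Row}(M)\}$.

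For the forward direction, suppose $M^+x=y$. Then $My=MM^+x=\mathrm{Proj}_{\mathrm{im}(M)}(x)$ directly. Moreover $y=M^+x\in \mathrm{im}(M^+)=\mathrm{im}(M^*)$, so $y^*\in \mathrm{Row}(M)$.

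For the converse, assume $y^*\in\mathrm{Row}(M)$ and $My=\mathrm{Proj}_{\mathrm{im}(M)}(x)=MM^+x$. Then $M(y-M^+x)=0$, so $y-M^+x\in \mathrm{Null}(M)$. On the other hand, both $y$ and $M^+x$ lie in $\mathrm{im}(M^*)$: the first by hypothesis and the characterization above, the second because $\mathrm{im}(M^+)=\mathrm{im}(M^*)$. Using the standard orthogonal decomposition $\mathrm{Null}(M)\oplus\mathrm{im}(M^*)=\mathbb{Q}^n$ (which is already encoded in $M^+M$ being an orthogonal projection with image $\mathrm{im}(M^*)$ and kernel $\mathrm{Null}(M)$), we conclude $y-M^+x\in \mathrm{Null}(M)\cap \mathrm{im}(M^*)=\{0\}$, so $y=M^+x$.

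The only subtlety is the identification $\mathrm{im}(M^+)=\mathrm{im}(M^*)$ and the orthogonality $\mathrm{Null}(M)\perp \mathrm{im}(M^*)$; both are immediate consequences of the four defining equations, so I would expect the entire proof to be a few lines once these projection identities are recorded. I would also silently correct what appears to be a typo in the statement: to make $My$ well defined with $y$ an $n\times 1$-matrix, $M$ should be $m\times n$ rather than $m\times m$.
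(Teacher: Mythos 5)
Your overall architecture is the same as the paper's: identify $MM^+=\mathrm{Proj}_{\mathrm{im}(M)}$ and $M^+M=\mathrm{Proj}_{\mathrm{im}(M^*)}$, translate $y^*\in\mathrm{Row}(M)$ into $y\in\mathrm{im}(M^*)$, and then the forward implication is immediate while the converse is a uniqueness argument (the paper runs the converse through the algebraic chain $M^+x=M^+MM^+x=M^+My=y$, you run it through $y-M^+x\in\mathrm{Null}(M)\cap\mathrm{im}(M^*)=\{0\}$; these are essentially equivalent and both fine). Your remark about the size mismatch in the statement is also correct: one should read $M$ as $m\times n$ (or take $n=m$), and this does not affect how the lemma is used later in the paper.

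One justification as written does not work, though. You claim the inclusion $\mathrm{im}(M^*)\subseteq\mathrm{im}(M^+)$ "by writing $M^*=(MM^+M)^*=M^*(MM^+)^*$ and invoking (\ref{eqn:4})". That computation yields $M^*=M^*MM^+$, in which $M^+$ is applied \emph{first}; it only gives the trivial inclusion $\mathrm{im}(M^*MM^+)\subseteq\mathrm{im}(M^*)$ and says nothing about $\mathrm{im}(M^+)$. The factorization you want uses (\ref{eqn:1}) and (\ref{eqn:3}) instead: $M^*=\bigl(M(M^+M)\bigr)^*=(M^+M)^*M^*=M^+MM^*$, which does exhibit every column of $M^*$ as $M^+$ of something. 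Relatedly, the "symmetric argument" for $M^+M$, taken literally, produces $\mathrm{Proj}_{\mathrm{im}(M^+)}$ rather than $\mathrm{Proj}_{\mathrm{im}(M^*)}$; to land on $\mathrm{im}(M^*)$ you need the transpose step the paper makes explicit, namely $M^+M=(M^+M)^*=M^*(M^+)^*$ (so its image lies in $\mathrm{im}(M^*)$) together with $M(I-M^+M)=0$ (so its kernel lies in $\mathrm{Null}(M)=\mathrm{im}(M^*)^\perp$), or equivalently the identity $(M^*)^+=(M^+)^*$. Neither slip is fatal: the reverse inclusion $\mathrm{im}(M^*)\subseteq\mathrm{im}(M^+)$ is never actually used in your argument (you only need $\mathrm{im}(M^+)\subseteq\mathrm{im}(M^*)$ and $\mathrm{Null}(M)\perp\mathrm{im}(M^*)$), and the inclusion you do need follows directly from $M^+=M^+MM^+=(M^+M)^*M^+=M^*(M^+)^*M^+$. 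With these two justifications repaired, your proof is correct and essentially the paper's.
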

\begin{proof}
Notice that $M^+x=y$ implies $y=M^+My$  by (\ref{eqn:2}). Multiplying $M^+x=y$ by $M$ from left gives us $My=MM^+x$. Conversely, if we assume $y=M^+My$ and  $My=MM^+x$ then we have $M^+x=M^+MM^+x=M^+My=y$. Hence we have showed 
\begin{eqnarray}\label{equiv}
	M^+x=y \text{\ \ if and only if\ \ } y=M^+My \text{\ \ and\ \ } My=MM^+x.
\end{eqnarray}
 
Let $P=M^+M$. Then $P^2=P$ by (\ref{eqn:1}) and $P^*=P$ by (\ref{eqn:3}). So $P$ is an orthogonal projection. Notice that $M(I-P)=0$ by (\ref{eqn:1}). So $\mathrm{im}(I-P)\subseteq \mathrm{Null}(M)$. We also have $P=M^*(M^+)^*$. Hence $\mathrm{im}(P)\subseteq \mathrm{im}(M^*)$. Since $\mathrm{im}(M^*)=\mathrm{Null}(M)^{\perp}$, we have $$M^+M=P=\mathrm{Proj}_{\mathrm{im}(M^*)}.$$
Also notice that $(M^*)^+=(M^+)^*$. Hence we have
$$MM^+=(MM^+)^*=(M^+)^*M^*=(M^*)^+M^*=\mathrm{Proj}_{\mathrm{im}(M^{**})}=\mathrm{Proj}_{\mathrm{im}(M)}.$$
 The statement $y^*\in Row(M)$ is equivalent to the statement $y\in \mathrm{im}(M^*)$. Also notice that we have  $(I-M^+M)y=0$ if and only if $y\in \mathrm{im}(M^*)$ since $M^+M=\mathrm{Proj}_{\mathrm{im}(M^*)}.$ Therefore we have  $y^*\in Row(M)$ if and only if $y=M^+My$. Hence the equivalence in (\ref{equiv}) becomes
$$M^+x=y\text{\ \  if and only if \ \  } y^*\in Row(M) \text{ and } My=\mathrm{Proj}_{im(M)}(x)
.$$
\end{proof}

Let $P$ be an invertible $m\times m$-matrix. Then $P^+=P^{-1}$. In particular, if  $P$ is an orthogonal matrix then  $P^+=P^*$. Moreover for orthogonal matrices we have the following lemma.

\begin{lemma}\label{lemma_about_orthogonal_matrices}

Let $P$ be an orthogonal $m\times m$ matrix  and $M$ be any $m\times m$ matrix. Then $(MP)^+=P^+M^+$ and $(PM)^+=M^+P^+$.
\end{lemma}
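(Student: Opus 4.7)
The plan is to invoke the uniqueness of the Moore--Penrose inverse: a matrix $X$ is the Moore--Penrose inverse of $N$ as soon as the pair $(N,X)$ satisfies the four equations \eqref{eqn:1}--\eqref{eqn:4}. So for the first identity I would set $N=MP$ and $X=P^{+}M^{+}$ and check the four equations directly; the second identity is symmetric and handled in exactly the same way. Throughout, the only property of $P$ I need is that $P$ orthogonal gives $P^{+}=P^{*}=P^{-1}$, so in particular $PP^{+}=P^{+}P=I$.

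First I would write out the verification for $(MP)^{+}=P^{+}M^{+}$. Equation \eqref{eqn:1} becomes $MP\cdot P^{+}M^{+}\cdot MP = M(MM^{+}M)P/\cdot 1 = MP$ after the interior $PP^{+}=I$ collapses, using \eqref{eqn:1} for $M$. Equation \eqref{eqn:2} becomes $P^{+}M^{+}\cdot MP\cdot P^{+}M^{+}=P^{+}(M^{+}MM^{+})=P^{+}M^{+}$ similarly, using \eqref{eqn:2} for $M$. For the symmetry condition \eqref{eqn:3}, note $(P^{+}M^{+}\cdot MP)^{*}=P^{*}(M^{+}M)^{*}(P^{+})^{*}=P^{+}(M^{+}M)P=P^{+}M^{+}MP$, where I use $P^{*}=P^{+}$, $(P^{+})^{*}=P$, and \eqref{eqn:3} for $M$. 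For \eqref{eqn:4}, the inner $PP^{+}=I$ makes $MP\cdot P^{+}M^{+}=MM^{+}$, so $(MP\cdot P^{+}M^{+})^{*}=(MM^{+})^{*}=MM^{+}=MP\cdot P^{+}M^{+}$ by \eqref{eqn:4} for $M$. By uniqueness this gives $(MP)^{+}=P^{+}M^{+}$.

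The second identity $(PM)^{+}=M^{+}P^{+}$ is verified by the same four-step check; the roles of $P$ and $M$ are swapped in the placement but the computation is essentially mirrored, with collapses of $P^{+}P=I$ handling \eqref{eqn:1} and \eqref{eqn:2}, and conjugation by $P$ (which is real orthogonal) preserving self-adjointness for \eqref{eqn:3} and \eqref{eqn:4}. I do not expect any real obstacle here: the entire argument is a bookkeeping exercise in which every step is either the orthogonality of $P$ or one of the four Moore--Penrose axioms already known for $M$. The only thing to be careful about is to apply $P^{*}=P^{+}$ and $(P^{+})^{*}=P$ on the correct side when transposing products in \eqref{eqn:3} and \eqref{eqn:4}, so that the surviving factor matches the candidate inverse.
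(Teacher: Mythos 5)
Your proposal is correct and follows essentially the same route as the paper: you verify the four Penrose equations \eqref{eqn:1}--\eqref{eqn:4} for the candidate $P^{+}M^{+}$ (respectively $M^{+}P^{+}$), using only $P^{+}=P^{*}=P^{-1}$, and conclude by uniqueness of the Moore--Penrose inverse. Apart from a small typographical slip in your computation for \eqref{eqn:1} (the stray ``$/\cdot 1$''), the verification matches the paper's proof step for step.
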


\begin{proof}
Since $P$ is orthogonal, we have $P^+ = P^*$. The matrix $P^+ M^+$ satisfies Equation (\ref{eqn:1}), because $$(MP)(P^+ M^+)(MP)=M M^+ M P= MP.$$ The matrix  $P^+ M^+$ satisfies Equation (\ref{eqn:2}), because $$(P^+ M^+)(MP)(P^+ M^+)=P^+ M^+ M M^+ = P^+ M^+.$$
The matrix $P^+ M^+$ satisfies Equation (\ref{eqn:3}), because $$((P^+ M^+)(MP))^*=P^* M^* (M^+)^* (P^+)^*= P^+ (M^+ M)^* P = (P^+M^+)(MP).$$
The matrix $P^+ M^+$ satisfies Equation (\ref{eqn:4}), because $$((MP)(P^+ M^+))^*= (M M^+)^* = M M^+ = (MP)(P^+ M^+).$$
Similarly, $(PM)^+=M^+P^+$.

\end{proof}
Let $M\otimes N$ denote the Kronecker product of the square matrices $M$ and $N$. Then we know that
$$(M\otimes N)^+=M^+\otimes N^+$$
because in general we have $(A \otimes B)(C\otimes D) = (AC) \otimes (BD)$ when $A$, $B$, $C$, $D$ are square matrices.

Let $A_1$, $A_2$, $\dots$ $A_m$ be square matrices of possibly different sizes.
Then the matrix $\mathrm{diag}(A_1,A_2,\dots,A_n)$ is defined as the square matrix obtained by putting  $A_1$, $A_2$, $\dots$ $A_m$ on the diagonal and zero everywhere else. Then we know that
$$\mathrm{diag}(A_1,A_2,\dots,A_n)^+=\mathrm{diag}(A_1^+,A_2^+,\dots,A_n^+).$$

\subsection{Definition of $\chi(\mathcal{A})$}
Let $\mathbf{1}_m$ denote the $m\times 1$-matrix whose all entries are equal to $1$. Given a finite category $\mathcal{A}$, let $[\mathcal{A}]$ denote the adjacency matrix associated with the directed multigraph obtained from the category $\mathcal{A}$ by forgetting its composition data. In other words, if the object set of $\mathcal{A}$ is $\{a_1,a_2,\dots , a_m\}$ then $[\mathcal{A}]$ is the  $m\times m$-matrix whose $ij^\text{th}$ entry is the number of elements of the morphism set $\mathcal{A}(a_i,a_j)$. Let $|\mathcal{A}|$ denote the number of objects of a category $\mathcal{A}$. We define a rational number $\chi(\mathcal{A})$ associated with a category $\mathcal{A}$ as follows:
$$\chi(\mathcal{A})=\mathbf{1}_{|\mathcal{A}|}^*[\mathcal{A}]^+\mathbf{1}_{|\mathcal{A}|}$$
where we consider the $1\times 1$ matrix on the right hand side as a rational number. Note that $\chi(\mathcal{A})$ is independent of the order chosen on the set of objects of $\mathcal{A}$. To see this let $P$ be any ${|\mathcal{A}|}\times {|\mathcal{A}|}$-permutation matrix. Then $P$ is an orthogonal matrix and hence by Lemma \ref{lemma_about_orthogonal_matrices} we have
$$\mathbf{1}_{|\mathcal{A}|}^*(P^{-1}[\mathcal{A}]P)^+\mathbf{1}_{|\mathcal{A}|}=\mathbf{1}_{|\mathcal{A}|}^*P^{-1}[\mathcal{A}]^+P\mathbf{1}_{|\mathcal{A}|}=\mathbf{1}_{|\mathcal{A}|}^*[\mathcal{A}]^+\mathbf{1}_{|\mathcal{A}|}.$$
This proves that $\chi(\mathcal{A})$ is independent of the order of objects of $\mathcal{A}$.

\subsection{Comparison with $\chi_{Lein}(\mathcal{A})$}

We say an  $n\times 1$-matrix $w$ is \textbf{weighting} for an $m\times n$-matrix $M$ if $$Mw=\mathbf{1}_m.$$ An  $1\times m$-matrix $v$ is called a \textbf{coweighting }for an $m\times n$-matrix $M$ if $$vM=\mathbf{1}_n^*.$$
We will use the following facts about weightings and coweightings.
\begin{lemma}\label{lemma_for_existence_of_weighting}
	Let $M$ be an $m\times n$-matrix. Then  $M^+\mathbf{1}_m$ is a weighting of $M$  if and only if  $M$ has a weighting.
\end{lemma}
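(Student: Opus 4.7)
The plan is to observe that the forward direction is a tautology: if $M^+ \mathbf{1}_m$ is a weighting of $M$, then by definition $M$ has a weighting, namely $M^+ \mathbf{1}_m$ itself. So the content is entirely in the converse.

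For the converse, I would assume a weighting $w$ exists, so $Mw = \mathbf{1}_m$, and then apply the defining identity (\ref{eqn:1}) directly. Specifically, multiplying $MM^+ M = M$ on the right by $w$ gives $MM^+(Mw) = Mw$, which substitutes to $M(M^+ \mathbf{1}_m) = \mathbf{1}_m$. This is exactly the statement that $M^+ \mathbf{1}_m$ is a weighting of $M$.

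Conceptually this is just the observation that $MM^+$ is the orthogonal projection onto $\mathrm{im}(M)$ (as was derived inside the proof of Lemma \ref{lemma_about_moore_penrose_inverse}, and which holds for rectangular $M$ by the same argument), so it fixes any vector already lying in $\mathrm{im}(M)$; the hypothesis that $M$ admits a weighting is precisely the statement $\mathbf{1}_m \in \mathrm{im}(M)$.

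There is no real obstacle here — the whole argument is a one-line manipulation of the first Moore–Penrose axiom. The only thing to be slightly careful about is that the characterisation $MM^+ = \mathrm{Proj}_{\mathrm{im}(M)}$ was phrased in Lemma \ref{lemma_about_moore_penrose_inverse} for square matrices; but since the converse argument only needs equation (\ref{eqn:1}) applied to $w$, the geometric interpretation can be avoided entirely and no rectangular-versus-square distinction arises.
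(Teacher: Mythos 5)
Your argument is correct and is essentially identical to the paper's proof: both directions are handled the same way, with the nontrivial direction obtained by applying $MM^+M=M$ to the assumed weighting $w$ to get $\mathbf{1}_m=Mw=MM^+Mw=MM^+\mathbf{1}_m$. Nothing further is needed.
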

\begin{proof}
	Assume that $w$ is a weighting of $M$. Then $$\mathbf{1}_m=Mw=MM^+Mw=MM^+\mathbf{1}_m.$$	
	Hence $M^+\mathbf{1}_m$	is a weighiting for $M$. The converse is clear.
\end{proof}
\begin{lemma}\label{lemma_for_existence_of_coweighting}
	Let $M$ be an $m\times n$-matrix. Then  $\mathbf{1}_n^*M^+$ is a coweighting of $M$  if and only if  $M$ has a coweighting.
\end{lemma}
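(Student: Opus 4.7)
The plan is to mirror the proof of Lemma \ref{lemma_for_existence_of_weighting}, swapping the roles of left and right multiplication. The backward (``if'') direction is vacuous: if $\mathbf{1}_n^* M^+$ is a coweighting for $M$, then $M$ manifestly has a coweighting, namely $\mathbf{1}_n^* M^+$ itself.

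For the forward direction, I would start by assuming that $v$ is some coweighting for $M$, so that $vM = \mathbf{1}_n^*$. The goal is to show $\mathbf{1}_n^* M^+ M = \mathbf{1}_n^*$. I would apply Equation (\ref{eqn:1}) in the form $MM^+M = M$ by right-multiplying $v$ through: compute $vMM^+M$ in two ways. On one hand, using $MM^+M=M$, we get $vMM^+M = vM = \mathbf{1}_n^*$. On the other hand, substituting $vM = \mathbf{1}_n^*$ into the first two factors gives $vMM^+M = \mathbf{1}_n^* M^+ M$. Equating the two expressions yields $\mathbf{1}_n^* M^+ M = \mathbf{1}_n^*$, which is exactly the statement that $\mathbf{1}_n^* M^+$ is a coweighting of $M$.

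There is no real obstacle here; the argument is dual to the weighting case and uses only the defining property (\ref{eqn:1}) of the Moore--Penrose inverse. The only point to be a little careful about is the direction of multiplication: because coweightings multiply $M$ on the left while the Moore--Penrose identity $MM^+M=M$ is naturally applied on the right, one should verify that the substitutions happen in the correct order, but this is automatic from associativity.
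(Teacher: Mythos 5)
Your proposal is correct and is essentially identical to the paper's proof: both directions are handled the same way, with the forward direction obtained from the chain $\mathbf{1}_n^* = vM = vMM^+M = \mathbf{1}_n^* M^+ M$ using Equation (\ref{eqn:1}). No gaps.
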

\begin{proof}
	Assume that $v$ is a coweighting of $M$. Then $$\mathbf{1}_n^*=vM=vMM^+M=\mathbf{1}_n^*M^+M.$$	
	Hence $\mathbf{1}_n^*M^+$	is a coweighiting for $M$. The converse is clear.	
\end{proof}

\begin{lemma}\label{lemma_for_SLS}
	Let $M$ be an $m\times n$-matrix.  Assume that $w$ is a weighting for $M$ and $v$ is a coweighting for $M$. Then
	$$ \mathbf{1}_n^*w=v\mathbf{1}_m=\mathbf{1}_n^*M^+\mathbf{1}_m.$$
\end{lemma}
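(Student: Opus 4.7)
The plan is to observe that the identity has essentially no content beyond the associativity of matrix multiplication once one knows Lemma \ref{lemma_for_existence_of_weighting} (or, symmetrically, Lemma \ref{lemma_for_existence_of_coweighting}).

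First I would establish the leftmost equality $\mathbf{1}_n^* w = v \mathbf{1}_m$ directly. Since $vM = \mathbf{1}_n^*$ and $Mw = \mathbf{1}_m$, associativity gives
$$\mathbf{1}_n^* w = (vM)w = v(Mw) = v \mathbf{1}_m.$$
This step uses only the definitions of weighting and coweighting, together with the fact that matrix multiplication associates; no Moore--Penrose content enters here.

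Next I would bring in the Moore--Penrose inverse to identify this common value with $\mathbf{1}_n^* M^+ \mathbf{1}_m$. Since $M$ admits a weighting by hypothesis, Lemma \ref{lemma_for_existence_of_weighting} tells us that $M^+ \mathbf{1}_m$ is itself a weighting for $M$, i.e.\ $M M^+ \mathbf{1}_m = \mathbf{1}_m$. Substituting this into the computation of $v \mathbf{1}_m$ gives
$$v \mathbf{1}_m = v (M M^+ \mathbf{1}_m) = (vM) M^+ \mathbf{1}_m = \mathbf{1}_n^* M^+ \mathbf{1}_m.$$
(One could equally well use Lemma \ref{lemma_for_existence_of_coweighting} on the left side: $\mathbf{1}_n^* M^+$ is a coweighting, so $\mathbf{1}_n^* w = \mathbf{1}_n^* M^+ M w = \mathbf{1}_n^* M^+ \mathbf{1}_m$.) Chaining the two equalities yields the desired triple identity.

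There is really no obstacle in this proof; the only subtlety is noticing that the hypotheses of Lemma \ref{lemma_for_existence_of_weighting} (or \ref{lemma_for_existence_of_coweighting}) are in force, so that $M^+ \mathbf{1}_m$ may legitimately be inserted into the product as a weighting.
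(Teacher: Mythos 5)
Your proof is correct and follows essentially the same route as the paper: the first equality is the identical associativity computation $\mathbf{1}_n^*w=vMw=v\mathbf{1}_m$, and your second step, which routes through Lemma \ref{lemma_for_existence_of_weighting} to get $MM^+\mathbf{1}_m=\mathbf{1}_m$, amounts to the paper's direct insertion of the Penrose identity $M=MM^+M$ into $vMw$ (that identity is exactly what proves the cited lemma). No gaps.
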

\begin{proof} Assume that $w$ is a weighting for $M$ and $v$ is a coweighting for $M$. Then we have
	$$\mathbf{1}_n^*w=vMw=v\mathbf{1}_m.$$
To prove the last equality, note that	
	$$vMw=vMM^+Mw=\mathbf{1}_n^*M^+\mathbf{1}_m.$$
\end{proof}

Let $\chi_{Lein}(\mathcal{A})$  denote the Euler characteristic of the category $\mathcal{A}$ as defined in \cite{leinster}.
We now show that $\chi$ is an extension of $\chi_{Lein}$ to all finite categories.

\begin{theorem}\label{thm:ext_of_Lein}
	Let $\mathcal{A}$ be a finite category and assume that $\chi_{Lein}(\mathcal{A})$ is defined. Then $$\chi_{Lein}(\mathcal{A})=\chi (\mathcal{A}).$$
\end{theorem}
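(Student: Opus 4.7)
The plan is to observe that this theorem is essentially a direct consequence of Lemma \ref{lemma_for_SLS} applied to $M = [\mathcal{A}]$, so almost all the work has already been done.

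First, I would recall Leinster's definition: $\chi_{Lein}(\mathcal{A})$ is defined precisely when the adjacency matrix $[\mathcal{A}]$ admits both a weighting $w$ (an $m\times 1$-matrix with $[\mathcal{A}]w = \mathbf{1}_m$) and a coweighting $v$ (a $1\times m$-matrix with $v[\mathcal{A}] = \mathbf{1}_m^*$), in which case $\chi_{Lein}(\mathcal{A}) = \mathbf{1}_m^* w = v\mathbf{1}_m$, the common value being independent of the choice of weighting and coweighting.

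Next, under the hypothesis that $\chi_{Lein}(\mathcal{A})$ is defined, I would simply apply Lemma \ref{lemma_for_SLS} with $M = [\mathcal{A}]$. That lemma gives
$$\chi_{Lein}(\mathcal{A}) = \mathbf{1}_m^* w = v\mathbf{1}_m = \mathbf{1}_m^* [\mathcal{A}]^+ \mathbf{1}_m = \chi(\mathcal{A}),$$
which is exactly the claim.

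There is no real obstacle here beyond correctly invoking the definition; the content of the theorem is concentrated in Lemma \ref{lemma_for_SLS}, where the identity $\mathbf{1}_m^* w = v M w = v M M^+ M w = \mathbf{1}_m^* M^+ \mathbf{1}_m$ was established. The only subtlety worth a sentence is noting that the existence of both a weighting and a coweighting (Leinster's hypothesis) is exactly what makes Lemma \ref{lemma_for_SLS} applicable, so the theorem requires no stronger assumption.
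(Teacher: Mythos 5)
Your proposal is correct and coincides with the paper's own proof: both invoke Leinster's definition to obtain a weighting $w$ and a coweighting $v$ for $[\mathcal{A}]$, and then apply Lemma \ref{lemma_for_SLS} with $M=[\mathcal{A}]$ to conclude $\chi_{Lein}(\mathcal{A})=\mathbf{1}_{|\mathcal{A}|}^*w=\mathbf{1}_{|\mathcal{A}|}^*[\mathcal{A}]^+\mathbf{1}_{|\mathcal{A}|}=\chi(\mathcal{A})$. No gaps.
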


\begin{proof} Let $\mathcal{A}$ be a finite small category. Assume that the Euler characteristic $\chi_{Lein}(\mathcal{A})$ of the category $\mathcal{A}$ is defined as in \cite{leinster}. This means that there exists a coweighting $v$ and a weighting $w$ for $[\mathcal{A}]$.  Hence by Lemma \ref{lemma_for_SLS} we have	 $$\chi_{Lein}(\mathcal{A})=\mathbf{1}_{|\mathcal{A}|}^*w=\mathbf{1}_{|\mathcal{A}|}^*[\mathcal{A}]^+\mathbf{1}_{|\mathcal{A}|}=\chi(\mathcal{A}).$$
\end{proof}

\subsection{Properties of $\chi $}
For a functor $L:\mathcal{A}\rightarrow \mathcal{B}$, let $[L]$ denote the $|\mathcal{B}|\times |\mathcal{A}|$-matrix whose $ij^{\text{th}}$ entry is $1$ if $L$ sends the $j^{\text{th}}$ object of $\mathcal{A}$ to the $i^{\text{th}}$ object of $\mathcal{B}$ and $0$ otherwise. We give a proof of Proposition 2.4 in \cite{leinster} by using the terminolgy developed above.
\begin{theorem}\label{thm:equiv_of_cat}
	Let $L:\mathcal{A}\rightleftarrows B:R $ be an adjunction. Assume that $[\mathcal{A}]$ has a coweighting and $[\mathcal{B}]$ has a weighting.
	Then $[\mathcal{B}]$ has a coweighting and $[\mathcal{A}]$ has a weighting  and we have $\chi (\mathcal{A})= \chi (\mathcal{B})$.
\end{theorem}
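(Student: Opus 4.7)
The plan is to encode the adjunction as a matrix identity and then push a weighting and a coweighting across it. Since $L\dashv R$ gives a bijection $\mathcal{B}(La,b)\cong\mathcal{A}(a,Rb)$ for every $a\in\mathcal{A}$ and $b\in\mathcal{B}$, the $|\mathcal{A}|\times|\mathcal{B}|$ matrix with $(a,b)$-entry $|\mathcal{B}(La,b)|=|\mathcal{A}(a,Rb)|$ can be computed in two ways, and an entry-by-entry check using the definitions of $[L]$ and $[R]$ shows
\[
[L]^{*}[\mathcal{B}]=[\mathcal{A}][R].
\]
Because $L$ and $R$ are functions on object sets, each column of $[L]$ and each column of $[R]$ has a single $1$, so I also record the elementary identities $[L]^{*}\mathbf{1}_{|\mathcal{B}|}=\mathbf{1}_{|\mathcal{A}|}$ and $\mathbf{1}_{|\mathcal{A}|}^{*}[R]=\mathbf{1}_{|\mathcal{B}|}^{*}$.

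Next I use these to transport the given data. Assume $w$ is a weighting for $[\mathcal{B}]$ and $v$ a coweighting for $[\mathcal{A}]$. I claim that $w':=[R]w$ is a weighting for $[\mathcal{A}]$ and $v':=v[L]^{*}$ is a coweighting for $[\mathcal{B}]$. Indeed,
\[
[\mathcal{A}]w'=[\mathcal{A}][R]w=[L]^{*}[\mathcal{B}]w=[L]^{*}\mathbf{1}_{|\mathcal{B}|}=\mathbf{1}_{|\mathcal{A}|},
\]
and dually
\[
v'[\mathcal{B}]=v[L]^{*}[\mathcal{B}]=v[\mathcal{A}][R]=\mathbf{1}_{|\mathcal{A}|}^{*}[R]=\mathbf{1}_{|\mathcal{B}|}^{*}.
\]
So $[\mathcal{A}]$ has a weighting and $[\mathcal{B}]$ has a coweighting, which is half of what the theorem asserts.

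For the equality of Euler characteristics I invoke Lemma \ref{lemma_for_SLS}: whenever both a weighting and a coweighting exist, their total sums agree with $\mathbf{1}^{*}M^{+}\mathbf{1}$. Since $[\mathcal{A}]$ now has both $v$ and $w'=[R]w$, and $[\mathcal{B}]$ has both $w$ and $v'=v[L]^{*}$, I compute
\[
\chi(\mathcal{A})=\mathbf{1}_{|\mathcal{A}|}^{*}w'=\mathbf{1}_{|\mathcal{A}|}^{*}[R]w=\mathbf{1}_{|\mathcal{B}|}^{*}w=\chi(\mathcal{B}),
\]
using $\mathbf{1}_{|\mathcal{A}|}^{*}[R]=\mathbf{1}_{|\mathcal{B}|}^{*}$ in the middle step.

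The only non-trivial step is the matrix identity $[L]^{*}[\mathcal{B}]=[\mathcal{A}][R]$, and even that reduces to unpacking what $[L]$ and $[R]$ mean and observing that the adjunction bijection is a statement about cardinalities of hom-sets. Once this identity is in hand the rest is formal manipulation, and the main thing to be careful about is keeping the conventions for row/column indices of $[L]$ and $[R]$ straight so that the shapes of $[R]w$ and $v[L]^{*}$ come out right.
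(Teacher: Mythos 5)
Your proposal is correct and follows essentially the same route as the paper: the identity $[\mathcal{A}][R]=[L]^{*}[\mathcal{B}]$ coming from the adjunction, the column-sum identities for $[L]$ and $[R]$, transport of the weighting and coweighting across these, and Lemma \ref{lemma_for_SLS} to conclude $\chi(\mathcal{A})=\chi(\mathcal{B})$. The only cosmetic difference is that the paper instantiates the weighting and coweighting as $[\mathcal{B}]^{+}\mathbf{1}_{|\mathcal{B}|}$ and $\mathbf{1}_{|\mathcal{A}|}^{*}[\mathcal{A}]^{+}$ and carries out the final computation as one chain of Moore--Penrose identities, whereas you keep $v,w$ abstract and invoke Lemma \ref{lemma_for_SLS} twice at the end.
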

\begin{proof}
	Assume that $[\mathcal{A}]$ has a coweighting and $[\mathcal{B}]$ has a weighting and $L:\mathcal{A}\rightleftarrows B:R $ is an adjunction. By Lemma \ref{lemma_for_existence_of_weighting} and Lemma \ref{lemma_for_existence_of_coweighting}, we know that
	 $$\mathbf{1}_{|\mathcal{A}|}^*[\mathcal{A}]^+[\mathcal{A}]=\mathbf{1}_{|\mathcal{A}|}^*\text{\ \ and \ \ } [\mathcal{B}][\mathcal{B}]^+\mathbf{1}_{|\mathcal{B}|}=\mathbf{1}_{|\mathcal{B}|}.$$
	 Moreover, we know that
	 $$\mathbf{1}_{|\mathcal{B}|}^*=\mathbf{1}_{|\mathcal{A}|}^*[R]
	 \text{\ \ and \ \ }[L]^*\mathbf{1}_{|\mathcal{B}|}=\mathbf{1}_{|\mathcal{A}|} $$
	 because each column of the matrices $[R]$ and $[L]$ contains exactly one entry that is equal to $1$. Finally we have
	$$[\mathcal{A}][R]=[L]^* [\mathcal{B}]$$
	because $R$ and $L$ are adjoints of each other. Now notice that $$\mathbf{1}_{|\mathcal{B}|}^*=\mathbf{1}_{|\mathcal{A}|}^*[R]=\mathbf{1}_{|\mathcal{A}|}^*[\mathcal{A}]^+[\mathcal{A}][R]=\mathbf{1}_{|\mathcal{A}|}^*[\mathcal{A}]^+[L]^*[\mathcal{B}].$$
	Hence $\mathbf{1}_{|\mathcal{A}|}^*[\mathcal{A}]^+[L]^*$ is a coweighting for  $[\mathcal{B}]$. Similarly, $[R][\mathcal{B}]^+\mathbf{1}_{|\mathcal{B}|}$ is weighting for $[\mathcal{A}]$. Finally, we have
$$\mathbf{1}_{|\mathcal{B}|}^*[\mathcal{B}]^+\mathbf{1}_{|\mathcal{B}|}
=\mathbf{1}_{|\mathcal{A}|}^*[R][\mathcal{B}]^+\mathbf{1}_{|\mathcal{B}|}
=\mathbf{1}_{|\mathcal{A}|}^*[\mathcal{A}]^+[\mathcal{A}][R][\mathcal{B}]^+\mathbf{1}_{|\mathcal{B}|}
$$
$$=\mathbf{1}_{|\mathcal{A}|}^*[\mathcal{A}]^+[L]^*[\mathcal{B}][\mathcal{B}]^+\mathbf{1}_{|\mathcal{B}|}
=\mathbf{1}_{|\mathcal{A}|}^*[\mathcal{A}]^+[L]^*\mathbf{1}_{|\mathcal{B}|}
=\mathbf{1}_{|\mathcal{A}|}^*[\mathcal{A}]^+\mathbf{1}_{|\mathcal{A}|}.$$
Hence $\chi (\mathcal{A})= \chi (\mathcal{B})	$.
\end{proof}

Let $\mathcal{A}$ be a category and  $F:\mathcal{A}\rightarrow \mathbf{Cat}$ be a pseudofunctor. Then the \textbf{Grothendieck construction} $G(F)$ is the category that has as objects all pairs $(a,x)$ where $a$ is an object in  $\mathcal{A}$ and $x$ is an object in $F(a)$, and morphisms  from $(a,x)$ to $(b,y)$ are pairs $(f,\zeta)$ where $f:a\rightarrow b$ is a morphism in $\mathcal{A}$ and $\zeta:F(f)(x)\rightarrow y$ is a morphism in $F(b)$.

For the rest of this section, let $\mathcal{A}$ be a finite category with object set 
$$\mathrm{Ob}(\mathcal{A})=\{a_1,a_2,\dots a_m\}.$$ 
Let $F:\mathcal{A}\rightarrow \mathbf{Cat}$ be a pseudofunctor. Assume that  for  every $i$ in $\{1,2,\dots m\}$, the category $F(a_i)$ has object set $$\mathrm{Ob}(F(a_i))=\{x_{i1},x_{i2},\dots x_{in_i}\}$$ for some natural number $n_i$. Then $G(F)$ has $n_1+n_2+\dots +n_m$ many objects and objects of $G(F)$ are ordered as follows:
$(a_1,x_{11})$, $(a_1,x_{12})$, $\dots $, $(a_1,x_{1n_1})$, $(a_2,x_{21})$, $\dots $, $(a_2,x_{2n_2})$, $\dots $, $(a_m,x_{m1})$, $\dots $, $(a_m,x_{mn_m})$.

Let $U:\mathbf{Cat}\rightarrow \mathbf{Set}$ denote the functor that sends a category to its set of objects.  Let $T:\mathbf{Set}\rightarrow \mathbf{Cat}$ denote the discrete category functor.  Define $L_1(F):\mathcal{A}\rightarrow \mathbf{Cat} $ as follows: $$L_1(F)=T\circ U\circ F.$$ Notice that there exists a natural transformation $i:T\circ U\Rightarrow \mathrm{Id}_{\mathbf{Cat}}$ given by inclusion on each component. Define $L_2(F):T\circ U(\mathcal{A})\rightarrow \mathbf{Cat} $ as follows:  $$L_2(F)=F\circ i_{\mathcal{A}}.$$
Then we have the following lemmas:

\begin{lemma} \label{lemma_for_G_L_2_F}
	$[G(L_2(F))]=\mathrm{diag}\left([F(a_1)],[F(a_2)],\dots,[F(a_m)]\right)$.
\end{lemma}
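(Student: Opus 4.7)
The plan is to unpack the definition of $G(L_2(F))$ and observe that because $T\circ U(\mathcal{A})$ is the discrete category on $\mathrm{Ob}(\mathcal{A})$, the only morphisms in $G(L_2(F))$ live over identities of $\mathcal{A}$, which immediately yields the block-diagonal structure on the adjacency matrix.

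First I would check that $G(L_2(F))$ has the same object set as $G(F)$, with the ordering specified earlier in the section. Indeed, $\mathrm{Ob}(T\circ U(\mathcal{A}))=\{a_1,\dots,a_m\}$, and for every $i$ one has $L_2(F)(a_i)=F(i_{\mathcal{A}}(a_i))=F(a_i)$, so the objects of $G(L_2(F))$ are exactly the pairs $(a_i,x_{ij})$ with $1\le i\le m$ and $1\le j\le n_i$, listed in the block order grouped by $i$.

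Next I would count morphisms. A morphism from $(a_i,x_{ik})$ to $(a_j,x_{j\ell})$ in $G(L_2(F))$ is a pair $(f,\zeta)$ with $f\colon a_i\to a_j$ in $T\circ U(\mathcal{A})$ and $\zeta\colon L_2(F)(f)(x_{ik})\to x_{j\ell}$ in $L_2(F)(a_j)=F(a_j)$. Since $T\circ U(\mathcal{A})$ is discrete, such an $f$ exists only when $i=j$, in which case $f=\mathrm{id}_{a_i}$ and $L_2(F)(f)$ is (canonically isomorphic via the pseudofunctor coherence to) the identity functor on $F(a_i)$. Precomposition with this canonical isomorphism gives a bijection between the possible $\zeta$ and the morphisms $x_{ik}\to x_{i\ell}$ in $F(a_i)$.

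Therefore the $((i,k),(j,\ell))$-entry of $[G(L_2(F))]$ equals $0$ when $i\ne j$ and equals the $(k,\ell)$-entry of $[F(a_i)]$ when $i=j$. With the object ordering fixed before the lemma, this is precisely $\mathrm{diag}\bigl([F(a_1)],[F(a_2)],\dots,[F(a_m)]\bigr)$. The argument presents no real obstacle; the only subtle point is that $F$ is a pseudofunctor rather than a strict functor, but the coherence isomorphism affects only composition in $G(L_2(F))$ and not the cardinalities of the hom-sets, so the adjacency matrix is unchanged.
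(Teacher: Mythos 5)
Your proof is correct and follows essentially the same route as the paper, which simply records the hom-set identification $G(L_2(F))((a,x),(a',x'))\cong F(a)(x,x')$ for $a=a'$ and $\emptyset$ otherwise. Your extra remark that the pseudofunctor coherence isomorphism only rescales $F(\mathrm{id}_{a_i})(x)$ up to isomorphism and hence leaves the hom-set cardinalities unchanged is a worthwhile point the paper leaves implicit here (it surfaces explicitly only later, in the proof of Lemma \ref{lemma_about_existence_of_inv_M}).
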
	
\begin{proof} Given two objects  $a$, $a'$ in $\mathcal{A}$ and an object $x$ in $F(a)$ and  $x'$ in $F(a')$, we have
	$$G(L_2(F))((a,x),(a',x'))\cong F(a)(x,x')$$ when
	$a=a'$ and $$G(L_2(F))((a,x),(a',x'))=\emptyset$$ when  $a\neq a'$.
\end{proof}

\begin{lemma}\label{lemma_of_seperation}
	$[G(F)]=\left[G(L_1(F))\right]\left[G(L_2(F))\right].$
\end{lemma}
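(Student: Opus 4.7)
The plan is to unpack both sides as matrices and verify they agree entry by entry, using the object ordering fixed in the paper so that rows and columns of all three matrices are indexed by the same list of pairs $(a_i,x_{ij})$. First I would compute the $((a_i,x_{ij}),(a_k,x_{kl}))$-entry of $[G(F)]$ directly from the definition of the Grothendieck construction: a morphism $(a_i,x_{ij})\to(a_k,x_{kl})$ in $G(F)$ is a pair $(f,\zeta)$ with $f\in\mathcal{A}(a_i,a_k)$ and $\zeta\in F(a_k)(F(f)(x_{ij}),x_{kl})$, so the entry equals
$$\sum_{f\in\mathcal{A}(a_i,a_k)} \bigl|F(a_k)\bigl(F(f)(x_{ij}),x_{kl}\bigr)\bigr|.$$

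Next I would unpack the two factors on the right. Lemma \ref{lemma_for_G_L_2_F} tells us that $[G(L_2(F))]$ is block diagonal; its $((a_r,x_{rs}),(a_k,x_{kl}))$-entry is $|F(a_k)(x_{ks},x_{kl})|$ when $r=k$ and $0$ otherwise. For $[G(L_1(F))]$, observe that the codomain $L_1(F)(a_r)=T\circ U\circ F(a_r)$ is discrete and shares its object set with $F(a_r)$; hence a morphism $(a_i,x_{ij})\to(a_r,x_{rs})$ in $G(L_1(F))$ is just a morphism $f\in\mathcal{A}(a_i,a_r)$ with the discrete constraint $F(f)(x_{ij})=x_{rs}$. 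Thus the corresponding entry is $|\{f\in\mathcal{A}(a_i,a_r):F(f)(x_{ij})=x_{rs}\}|$.

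Multiplying, the sum over intermediate indices $(a_r,x_{rs})$ collapses to the $r=k$ block, and the sum over $s$ partitions $\mathcal{A}(a_i,a_k)$ according to the value $s$ for which $F(f)(x_{ij})=x_{ks}$. Each $f\in\mathcal{A}(a_i,a_k)$ contributes in exactly one summand, after which the $[G(L_2(F))]$ factor inserts $|F(a_k)(x_{ks},x_{kl})|=|F(a_k)(F(f)(x_{ij}),x_{kl})|$. Reassembling yields
$$\sum_{s}\bigl|\{f\in\mathcal{A}(a_i,a_k):F(f)(x_{ij})=x_{ks}\}\bigr|\cdot |F(a_k)(x_{ks},x_{kl})| \;=\; \sum_{f\in\mathcal{A}(a_i,a_k)}\bigl|F(a_k)(F(f)(x_{ij}),x_{kl})\bigr|,$$
which is precisely the $[G(F)]$-entry computed above.

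The main obstacle is purely bookkeeping. The one subtle point worth flagging is that the partition argument relies on $L_1(F)(f)$ acting on objects the same way as $F(f)$, which is immediate from $L_1(F)=T\circ U\circ F$ since $T\circ U$ is the identity on object sets. Because $F$ is only a pseudofunctor, the associator and unitor isomorphisms live inside the fibre categories and do not affect any of the cardinality counts appearing above, so each equality is an equality of non-negative integers.
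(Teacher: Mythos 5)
Your proof is correct and follows essentially the same route as the paper's: both verify the identity entrywise, using the block-diagonal description of $[G(L_2(F))]$ from Lemma \ref{lemma_for_G_L_2_F} and the partition of $\mathcal{A}(a_i,a_k)$ according to the object $F(f)(x_{ij})$, which is exactly the coproduct decomposition of $G(F)((a,x),(a',x'))$ the paper writes down. Your extra remarks on the discrete fibres of $L_1(F)$ and on the pseudofunctor coherence data not affecting the counts just make explicit what the paper leaves implicit.
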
	
\begin{proof}  Given two objects  $a$, $a'$ in $\mathcal{A}$ and an object $x$ in $F(a)$ and $x'$ in $F(a')$, we have
	 $$G(F)((a,x),(a',x'))\cong \coprod _{y\text{ object in } F(a')} \{\, f\in \mathcal{A}(a,a')\, |\, f(x)=y\, \}\times F(a')(y,x').$$
Now notice that
$$|\{\, f\in \mathcal{A}(a,a')\, |\, f(x)=y\, \}|=|G(L_1(F))((a,x),(a',y))|.$$
Now by Lemma \ref{lemma_for_G_L_2_F}, we have
$$|G(L_2(F))((a',y),(a',x'))|=|F(a')(y,x')|.$$
Hence we have
$$|G(F)((a,x),(a',x'))|=\sum_{y\text{ object in } F(a')}|G(L_1(F))((a,x),(a',y))|G(L_2(F))((a',y),(a',x'))||.$$
Also by Lemma \ref{lemma_for_G_L_2_F}, we have
$$|G(L_2(F))((\widetilde{a},\widetilde{y}),(a',x'))|=0$$
for any object $\widetilde{a}$ in $\mathcal{A}$ and an object $\widetilde{y}$ in $F(\widetilde{a})$ when $\widetilde{a}\neq a'$. Hence we get
$$[G(F)]=\left[G(L_1(F))\right]\left[G(L_2(F))\right].$$
\end{proof}
Let $v_1$, $v_2$, $\dots$, $v_m$ be column vectors of possibly different sizes. Then we write $C(v_1,v_2,\dots , v_m)$ for the following column vector
$$C(v_1,v_2,\dots , v_m)=\left[\begin{array}{c}
	v_1 \\
	v_2 \\
	\vdots  \\
	v_m 		
\end{array}\right]$$
whose number of rows is equal to the sum of the number rows of $v_1$, $v_2$, $\dots$, $v_m$. Note that if $v_1$, $v_2$, $\dots$, $v_m$ are all rational numbers then $C(v_1,v_2,\dots , v_m)$ is an $m\times 1$-matrix.

\begin{lemma} \label{lemma_for_G_L_1_F} Let $\mu_1,\mu_2,\dots, \mu_m,\lambda_1,\lambda_2,\dots, \lambda_m $ be rational numbers. Assume $$[\mathcal{A}]C\left(\mu_1,\mu_2,\dots, \mu_m\right)=
C\left(\lambda_1,\lambda_2,\dots, \lambda_m\right).$$ Then we have
$$\left[G(L_1(F))\right]C\left(\mu_1\mathbf{1}_{n_1},\mu_2\mathbf{1}_{n_2},\dots, \mu_m\mathbf{1}_{n_m}\right)=
C\left(\lambda_1\mathbf{1}_{n_1},\lambda_2\mathbf{1}_{n_2},\dots, \lambda_m\mathbf{1}_{n_m}\right).$$	
\end{lemma}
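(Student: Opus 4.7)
The plan is to unpack the block structure of $[G(L_1(F))]$ induced by the fibers over the objects of $\mathcal{A}$ and to reduce the claimed identity to a row-sum computation inside each block.

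First I would note that $L_1(F)(a_{i'}) = T(U(F(a_{i'})))$ is by definition the discrete category on $\{x_{i'1},\ldots,x_{i'n_{i'}}\}$, so a morphism $(a_i,x_{ij})\to(a_{i'},x_{i'j'})$ in $G(L_1(F))$ amounts to a morphism $f\colon a_i\to a_{i'}$ in $\mathcal{A}$ together with the requirement $F(f)(x_{ij})=x_{i'j'}$ (the second coordinate being forced to be the identity in a discrete category). With the ordering of objects specified in the paper, this partitions the rows and columns of $[G(L_1(F))]$ into $m$ consecutive blocks of sizes $n_1,n_2,\ldots,n_m$; the $(i,i')$-block $B_{i,i'}$ is the $n_i\times n_{i'}$ matrix whose $(j,j')$-entry equals $|\{f\in\mathcal{A}(a_i,a_{i'}):F(f)(x_{ij})=x_{i'j'}\}|$.

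The key step is the row-sum identity: every row of $B_{i,i'}$ sums to $|\mathcal{A}(a_i,a_{i'})|=[\mathcal{A}]_{i,i'}$. This is because, for any fixed $(i,j,i')$, each $f\in\mathcal{A}(a_i,a_{i'})$ sends $x_{ij}$ to exactly one element of $\{x_{i'1},\ldots,x_{i'n_{i'}}\}$, so summing the $(j,j')$-entries over $j'$ merely partitions $\mathcal{A}(a_i,a_{i'})$ by the value of $F(f)(x_{ij})$. Consequently $B_{i,i'}\bigl(\mu_{i'}\mathbf{1}_{n_{i'}}\bigr)=\mu_{i'}\,[\mathcal{A}]_{i,i'}\,\mathbf{1}_{n_i}$.

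Summing these contributions over $i'$, the $i$-th block of $[G(L_1(F))]\,C(\mu_1\mathbf{1}_{n_1},\ldots,\mu_m\mathbf{1}_{n_m})$ equals $\bigl(\sum_{i'}[\mathcal{A}]_{i,i'}\mu_{i'}\bigr)\mathbf{1}_{n_i}$, which by the hypothesis $[\mathcal{A}]C(\mu_1,\ldots,\mu_m)=C(\lambda_1,\ldots,\lambda_m)$ equals $\lambda_i\mathbf{1}_{n_i}$; this matches the $i$-th block of $C(\lambda_1\mathbf{1}_{n_1},\ldots,\lambda_m\mathbf{1}_{n_m})$ block by block, giving the claim. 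There is no real obstacle here; the only substantive point is the row-sum identity, which is just the statement that the projection $G(L_1(F))\to\mathcal{A}$ is a discrete opfibration and therefore the morphisms out of $(a_i,x_{ij})$ landing in the fiber over $a_{i'}$ are in bijection with $\mathcal{A}(a_i,a_{i'})$.
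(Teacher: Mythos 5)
Your proof is correct and is essentially the paper's argument: the paper's one-line proof is precisely your row-sum identity $\sum_{x'}|G(L_1(F))((a,x),(a',x'))|=|\mathcal{A}(a,a')|$, from which the block-by-block computation follows. You simply spell out the block structure and the bookkeeping in more detail.
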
	
\begin{proof} Given two objects  $a$, $a'$ in $\mathcal{A}$ and an object $x$ in $F(a)$ and  $x'$ in $F(a')$  we have
	$$\sum _{x'\text{ object in }F(a')}|G(L_1(F))((a,x),(a',x'))|=|\mathcal{A}(a,a')|.$$
Hence the result follows.	
\end{proof}

\begin{remark}\label{remark_about_ext_order}
When $\mathcal{A}$ is a poset, we can choose a total order on objects of $\mathcal{A}$ that extends the partial order on it. Hence, whenever $\mathcal{A}$ is a poset, without loss of generality we can assume that there exists a morphism from $a_i$ to $a_j$ in $\mathcal{A}$ only if $i\leq j$.
\end{remark}

\begin{lemma}\label{lemma_about_existence_of_inv_M}
	If $\mathcal{A}$ is a poset then there exists an invertible matrix $M$ such that $$M\,[G(L_2(F))]=[G(L_1(F))][G(L_2(F))].$$
\end{lemma}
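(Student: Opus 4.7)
The plan is to take $M=[G(L_1(F))]$ itself; with that choice the required equation is tautological, so the entire burden is to show that $[G(L_1(F))]$ is invertible whenever $\mathcal{A}$ is a poset.

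First, I would invoke Remark \ref{remark_about_ext_order} to order the objects of $\mathcal{A}$ so that $\mathcal{A}(a_i,a_j)\neq\emptyset$ implies $i\leq j$. With this ordering, together with the ordering of objects of $G(L_1(F))$ fixed earlier in the excerpt, I view $[G(L_1(F))]$ as an $m\times m$ block matrix whose $(i,j)$-block has size $n_i\times n_j$. Because $L_1(F)(a_j)$ is the discrete category on the objects of $F(a_j)$, a morphism $\zeta:L_1(F)(f)(x_{ik})\to x_{j\ell}$ exists (and is then unique) iff $L_1(F)(f)(x_{ik})=x_{j\ell}$. Hence the $(x_{ik},x_{j\ell})$-entry of the $(i,j)$-block equals the number of $f\in\mathcal{A}(a_i,a_j)$ satisfying that equality, and in particular the entire $(i,j)$-block vanishes whenever $i>j$. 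So $[G(L_1(F))]$ is block upper-triangular.

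Next I would analyse the diagonal blocks. Since $\mathcal{A}$ is a poset, $\mathcal{A}(a_i,a_i)=\{\mathrm{id}_{a_i}\}$, and $L_1(F)(\mathrm{id}_{a_i})$ is the identity functor on $T(U(F(a_i)))$; in particular it fixes every object. Thus the $(i,i)$-block is the $n_i\times n_i$ identity, and $[G(L_1(F))]$ is unit block upper-triangular. Writing it as $I+N$ with $N$ strictly block upper-triangular (and therefore nilpotent) yields an explicit inverse $\sum_{k\geq 0}(-1)^kN^k$, which terminates after finitely many terms.

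The one subtlety worth flagging is that $F$ is only a pseudofunctor, so a priori $F(\mathrm{id}_{a_i})$ is merely naturally isomorphic to $\mathrm{id}_{F(a_i)}$. I would handle this by adopting the standard strictly unital convention, under which $L_1(F)(\mathrm{id}_{a_i})$ acts as the identity on the underlying object set; equivalently, one may strictify $F$ without altering any of the adjacency matrices $[G(F)]$, $[G(L_1(F))]$, $[G(L_2(F))]$ appearing in the statement. Once this is acknowledged, no further computation is required and the argument is complete.
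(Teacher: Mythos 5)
Your matrix is, after unwinding, the same one the paper constructs: block upper-triangular for the extended poset ordering, with identity diagonal blocks, hence invertible; the off-diagonal analysis and the triangularity argument are correct. The gap is in the final paragraph, where the pseudofunctor unit is dispatched. The claim that one may ``strictify $F$ without altering any of the adjacency matrices $[G(F)]$, $[G(L_1(F))]$, $[G(L_2(F))]$'' is false, and with it the claim that $M=[G(L_1(F))]$ itself is invertible. The diagonal block of $[G(L_1(F))]$ at $a$ is the $0$--$1$ matrix of the object map of $F(\mathrm{id}_a)$, which for a genuine pseudofunctor need not be a permutation: take $\mathcal{A}$ the one-object poset, $F(a)$ the indiscrete category on two objects $x_1,x_2$, and $F(\mathrm{id}_a)$ the functor sending both objects to $x_1$ (it is naturally isomorphic to the identity, and all coherence data exist since every hom-set is a singleton). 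Then $[G(L_1(F))]=\left[\begin{smallmatrix}1&0\\1&0\end{smallmatrix}\right]$ is singular, and normalizing $F$ changes this block to the identity. So taking $M=[G(L_1(F))]$ does not prove the lemma as stated, and ``adopting the strictly unital convention'' silently adds a hypothesis the statement does not contain.

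The repair is close to what you wrote, but the invariance you need is different from the one you assert: normalization (replacing $F(\mathrm{id}_a)$ by $\mathrm{id}_{F(a)}$ while keeping $F$ on non-identity morphisms) leaves $[G(F)]$ and $[G(L_2(F))]$ unchanged, because $F(\mathrm{id}_a)(x)\cong x$ gives $|F(a)(F(\mathrm{id}_a)(x),x')|=|F(a)(x,x')|$, whereas it does change $[G(L_1(F))]$, exactly in the offending diagonal blocks. Since by Lemma \ref{lemma_of_seperation} the right-hand side of the lemma equals $[G(F)]$, the unitriangular matrix $[G(L_1(F'))]$ attached to the normalized $F'$ still satisfies $M\,[G(L_2(F))]=[G(L_1(F))][G(L_2(F))]$, which is all that is required. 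The paper avoids the strictification discussion altogether: it defines $M$ by hand (equal to $[G(L_1(F))]$ outside the diagonal blocks, identity on them) and verifies $(M-[G(L_1(F))])[G(L_2(F))]=0$ directly, using the isomorphism $x\cong F(\mathrm{id}_a)(x)$ to see that the rows of $[G(L_2(F))]$ indexed by $x$ and by $F(\mathrm{id}_a)(x)$ coincide. You should either adopt that direct computation or state and use the correct invariance above; as written, the concluding step rests on a false assertion.
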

\begin{proof} Assume that $\mathcal{A}$ is a poset. By Remark \ref{remark_about_ext_order}, we can assume that the chosen total order $a_1$, $a_2$, $\dots $, $a_m$ on the set of objects of $\mathcal{A}$ extends the partial order on it given by the poset structure on $\mathcal{A}$. For $i$ in $\{1,2\}$, let $M_i$ denote the matrix $[G(L_i(F))]$. Hence to prove the lemma we need to find an invertible matrix $M$ such that $(M-M_1)M_2=0$. Consider these matrices as functions from $\mathrm{Ob}(G(F))\times \mathrm{Ob}(G(F))$ to $\mathbb{Z}$. Let $M$ be the matrix defined as follows:
	\[
	M((a,x),(a',x')) =
	\begin{cases}
		M_1((a,x),(a',x')) & \text{if } a \neq a' \\
		1 & \text{if } (a,x)=(a',x') \\
		0 & \text{otherwise }
	\end{cases}
	\]
	for objects $(a,x)$, $(a',x')$ in $G(F)$. First notice that $M$ is an upper triangular matrix and all of its diagonal entries are equal to $1$. Hence $M$ is an invertible matrix. Second notice that for each object $a$ in $\mathcal{A}$, there exists a natural isomorphism from $\mathrm{id}_{F(a)}$ to $F(\mathrm{id}_{a})$, since $F$ is a pseudofunctor. In particular, this means for each object $(a,x)$ in $G(F)$ there exists an isomorphism $\sigma_{a,x}:x\rightarrow F(\mathrm{id}_{a})(x)$ in the category $F(a)$. Therefore we have
	$$M_2((a,x),(a',x'))=M_2((a,F(\mathrm{id}_{a})(x)),(a',x'))$$
	and
	$$
	(M-M_1)((a,x),(a',x')) =
	\begin{cases}
		1 & \text{if } a=a' \text{ and } x = x' \text{ and } x'\neq F(\mathrm{id}_{a})(x) \\
		-1 & \text{if } a=a' \text{ and } x\neq x' \text{ and } x' =F(\mathrm{id}_{a})(x) \\
		0 & \text{otherwise}
	\end{cases}
	$$
	for every object $(a,x)$, $(a',x')$ in $G(F)$. Thus we have
	$$
	\begin{aligned}
		(M-M_1 )M_2((a,x),(a',x')) & = \sum_{y\in \{x,F(\mathrm{id}_{a})(x))\}} (M-M_1)((a,x),(a,y))M_2((a,y),(a',x')) \\
		& = M_2((a,x),(a',x')) - M_2((a,F(\mathrm{id}_{a})(x)),(a',x')) \\
		& = 0
	\end{aligned}
	$$
	Hence we have proved that $(M-M_1)M_2=0$.
\end{proof}

The next lemma is an immediate corollary of the previous one.

\begin{lemma} \label{lemma_about_row_spaces}
	If $\mathcal{A}$ is a poset then $\mathrm{Row}([G(L_1(F))][G(L_2(F))])=\mathrm{Row}([G(L_2(F))])$.
\end{lemma}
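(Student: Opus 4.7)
The plan is to derive the lemma as a direct consequence of Lemma \ref{lemma_about_existence_of_inv_M}. That lemma supplies an invertible matrix $M$ with $M\,[G(L_2(F))]=[G(L_1(F))][G(L_2(F))]$, so the two matrices in the statement differ by left multiplication by an invertible matrix, and left multiplication by an invertible matrix preserves the row space.

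More concretely, first I would record the general fact that for any matrices $A$ and $B$ (of compatible sizes) one has $\mathrm{Row}(AB)\subseteq \mathrm{Row}(B)$, since each row of $AB$ is a linear combination of the rows of $B$ with coefficients given by the corresponding row of $A$. Applying this to $A=M$ and $B=[G(L_2(F))]$ yields
$$\mathrm{Row}([G(L_1(F))][G(L_2(F))])=\mathrm{Row}(M\,[G(L_2(F))])\subseteq \mathrm{Row}([G(L_2(F))]).$$

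For the reverse inclusion, since $M$ is invertible by Lemma \ref{lemma_about_existence_of_inv_M}, we can multiply the identity $M\,[G(L_2(F))]=[G(L_1(F))][G(L_2(F))]$ on the left by $M^{-1}$ to get $[G(L_2(F))]=M^{-1}[G(L_1(F))][G(L_2(F))]$. Applying the same general fact with $A=M^{-1}$ and $B=[G(L_1(F))][G(L_2(F))]$ gives
$$\mathrm{Row}([G(L_2(F))])\subseteq \mathrm{Row}([G(L_1(F))][G(L_2(F))]),$$
and the two inclusions together complete the proof.

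There is essentially no obstacle here: the entire content has already been extracted in Lemma \ref{lemma_about_existence_of_inv_M}, and what remains is the elementary observation that the row space is invariant under left multiplication by an invertible matrix. This is why the excerpt correctly advertises the statement as an immediate corollary of the preceding lemma.
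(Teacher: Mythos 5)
Your proof is correct and takes essentially the same route as the paper: both deduce the statement from Lemma \ref{lemma_about_existence_of_inv_M} together with the fact that left multiplication by an invertible matrix preserves the row space. The only cosmetic difference is that the paper justifies this fact by factoring $M$ into elementary matrices (row operations), while you argue it via the two inclusions $\mathrm{Row}(AB)\subseteq\mathrm{Row}(B)$ applied to $M$ and $M^{-1}$.
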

\begin{proof}
	By Lemma \ref{lemma_about_existence_of_inv_M}, we know that there exists an in invertible matrix $M$ such that $M\,[G(L_2(F))]=[G(L_1(F))][G(L_2(F))].$
	We can write $M$ as a product of elementary matrices since $M$ is invertible. Moreover multiplication by an elementary matrix from left is same as performing a row operation. Therefore the matrix $[G(L_2(F))]$ is row equivalent to $M\,[G(L_2(F))]$. Hence we have $$\mathrm{Row}([G(L_1(F))][G(L_2(F))])=\mathrm{Row}([G(L_2(F))]).$$
\end{proof}

\begin{lemma}\label{lemma:inc_exc_weight} Let $C(\lambda _1,\lambda_2,\dots,\lambda_m)$ be a weighting for $[\mathcal{A}]$ and  $v_i $ a weighting for $[F(a_i)]$ for every $i$ in $\{1,2,\dots m\}$.  Then $C(\lambda_1 v_1,\lambda_2 v_2,\dots, \lambda_m v_m)$ is a weighting for $G(F)$.
\end{lemma}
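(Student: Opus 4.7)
The plan is to compute $[G(F)]$ applied to $C(\lambda_1 v_1,\lambda_2 v_2,\dots,\lambda_m v_m)$ directly, using the factorization from Lemma \ref{lemma_of_seperation}, namely $[G(F)]=[G(L_1(F))]\,[G(L_2(F))]$, and then to multiply in two stages, first by $[G(L_2(F))]$ and then by $[G(L_1(F))]$.

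First I would apply $[G(L_2(F))]$. By Lemma \ref{lemma_for_G_L_2_F}, this matrix is block-diagonal with blocks $[F(a_i)]$, so its action on the stacked column vector $C(\lambda_1 v_1,\lambda_2 v_2,\dots,\lambda_m v_m)$ simply multiplies the $i$th block by $[F(a_i)]$. Since $v_i$ is a weighting of $[F(a_i)]$, we get $[F(a_i)](\lambda_i v_i)=\lambda_i \mathbf{1}_{n_i}$, so
$$[G(L_2(F))]\,C(\lambda_1 v_1,\lambda_2 v_2,\dots,\lambda_m v_m)=C(\lambda_1\mathbf{1}_{n_1},\lambda_2\mathbf{1}_{n_2},\dots,\lambda_m\mathbf{1}_{n_m}).$$

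Next I would apply $[G(L_1(F))]$. Because $C(\lambda_1,\lambda_2,\dots,\lambda_m)$ is a weighting for $[\mathcal{A}]$, we have $[\mathcal{A}]\,C(\lambda_1,\dots,\lambda_m)=\mathbf{1}_m=C(1,1,\dots,1)$. Lemma \ref{lemma_for_G_L_1_F} then gives exactly
$$[G(L_1(F))]\,C(\lambda_1\mathbf{1}_{n_1},\dots,\lambda_m\mathbf{1}_{n_m})=C(\mathbf{1}_{n_1},\mathbf{1}_{n_2},\dots,\mathbf{1}_{n_m})=\mathbf{1}_{n_1+n_2+\cdots+n_m}.$$
Combining the two steps via Lemma \ref{lemma_of_seperation} yields $[G(F)]\,C(\lambda_1 v_1,\dots,\lambda_m v_m)=\mathbf{1}_{|G(F)|}$, which is exactly the defining equation of a weighting for $[G(F)]$.

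There is no real obstacle here; the statement is designed so that the block-diagonal structure of $[G(L_2(F))]$ consumes the inner weightings $v_i$ and converts each block to a scaled $\mathbf{1}_{n_i}$, after which the combinatorial content of Lemma \ref{lemma_for_G_L_1_F} reduces the outer layer to the weighting condition on $[\mathcal{A}]$. The only thing to double-check is the bookkeeping: that the number of rows of $C(\lambda_1 v_1,\dots,\lambda_m v_m)$ equals the number of columns of $[G(F)]$, which follows from the fact that $v_i$ has $n_i$ entries and $G(F)$ has $n_1+\cdots+n_m$ objects in the ordering fixed earlier.
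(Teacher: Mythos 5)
Your proof is correct and follows essentially the same route as the paper's: apply $[G(L_2(F))]$ blockwise to turn each $\lambda_i v_i$ into $\lambda_i\mathbf{1}_{n_i}$, then use Lemma \ref{lemma_for_G_L_1_F} with the weighting of $[\mathcal{A}]$, and conclude via the factorization in Lemma \ref{lemma_of_seperation}. No issues to report.
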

\begin{proof} Let  $C(\lambda_1,\lambda_2,\dots,\lambda_m)$ be a weighting for $[\mathcal{A}]$  and  $v_i $ a weighting for $[F(a_i)]$ for every $i$ in $\{1,2,\dots m\}$.
By Lemma \ref{lemma_for_G_L_2_F} we have $$[G(L_2(F))]=\mathrm{diag}\left([F(a_1)],[F(a_2)],\dots,[F(a_m)]\right).$$
Hence $$[G(L_2(F))]C\left(\lambda_1 v_1,\lambda_2 v_2,\dots, \lambda_m v_m)\right)=
C\left(\lambda_1 \mathbf{1}_{n_1},\lambda_2\mathbf{1}_{n_2},\dots, \lambda_m\mathbf{1}_{n_m}\right).$$	
By Lemma \ref{lemma_for_G_L_1_F}, we have
$$\left[G(L_1(F))\right]C\left(\lambda_1\mathbf{1}_{n_1},\lambda_2\mathbf{1}_{n_2},\dots, \lambda_m\mathbf{1}_{n_m}\right)=C\left(\mathbf{1}_{n_1},\mathbf{1}_{n_2},\dots, \mathbf{1}_{n_m}\right)=\mathbf{1}_{\mathbf{|G(F)|}}.$$
By Lemma \ref{lemma_of_seperation}, we have $$[G(F)]=[G(L_1(F))][G(L_2(F))].$$
Hence we have
$$[G(F)]C\left(\lambda_1 v_1,\lambda_2 v_2,\dots, \lambda_m v_m\right)=\mathbf{1}_{\mathbf{|G(F)|}}.$$
Hence 	$C(\lambda_1 v_1,\lambda_2 v_2,\dots, \lambda_m v_m)$ is a weighting for $G(F)$.
\end{proof}

Let $\mathcal{A}$ be a finite category with object set  $\{a_1,a_2,\dots a_m\}$.
For a pseudofunctor $F:\mathcal{A}\rightarrow \mathbf{Cat}$, let $\chi (F)$ denote the $1\times m$-matrix
$$\chi (F)=\left[\,\, \chi(F(a_1))\,\, \chi(F(a_2))\, \cdots \, \chi(F(a_m))\,\, \right].$$

\begin{theorem}\label{thm:inc_exc_with_coweighting}  Let $\mathcal{A}$ be a finite category and  $F:\mathcal{A}\rightarrow \mathbf{Cat}$ a pseudofunctor such that for every object $a$ in $\mathcal{A}$, the category $F(a)$ is finite. Assume that  $[G(F)]$ has a coweighting, $[\mathcal{A}]$ has weighting, and for every object $a$ in $\mathcal{A}$, $[F(a)]$ has a weighting and a coweighting. Then we have
	$$\chi (G(F))=\chi (F)[\mathcal{A}]^+\mathbf{1}_{|\mathcal{A}|}	.$$
\end{theorem}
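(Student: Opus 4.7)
The plan is to produce an explicit weighting for $[G(F)]$ by assembling the weighting for $[\mathcal{A}]$ with weightings of the fibers $[F(a_i)]$, and then to extract $\chi(G(F))$ by pairing this weighting against $\mathbf{1}^*_{|G(F)|}$ through Lemma \ref{lemma_for_SLS}.

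Concretely, I would first invoke Lemma \ref{lemma_for_existence_of_weighting}: since $[\mathcal{A}]$ has a weighting by hypothesis, the vector $w:=[\mathcal{A}]^+\mathbf{1}_{|\mathcal{A}|}$ is itself a weighting for $[\mathcal{A}]$, and I write $w=C(\lambda_1,\lambda_2,\dots,\lambda_m)$. In the same way, for each $i$ the weighting hypothesis on $[F(a_i)]$ yields that $v_i:=[F(a_i)]^+\mathbf{1}_{n_i}$ is a weighting for $[F(a_i)]$, and directly from the definition of $\chi$ we have $\mathbf{1}^*_{n_i}v_i=\chi(F(a_i))$. The coweighting hypothesis on $[F(a_i)]$ is what guarantees, via Lemma \ref{lemma_for_SLS}, that this value is an intrinsic invariant independent of the weighting chosen.

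Feeding these ingredients into Lemma \ref{lemma:inc_exc_weight} produces
$$W:=C(\lambda_1 v_1,\lambda_2 v_2,\dots,\lambda_m v_m)$$
as a weighting for $[G(F)]$. Because $[G(F)]$ also admits a coweighting by hypothesis, Lemma \ref{lemma_for_SLS} applies to this weighting and delivers
$$\chi(G(F))=\mathbf{1}^*_{|G(F)|}[G(F)]^+\mathbf{1}_{|G(F)|}=\mathbf{1}^*_{|G(F)|}W=\sum_{i=1}^m\lambda_i\,(\mathbf{1}^*_{n_i}v_i)=\sum_{i=1}^m\lambda_i\,\chi(F(a_i)).$$
Recognizing the last sum as the row-by-column product $\chi(F)\cdot w=\chi(F)[\mathcal{A}]^+\mathbf{1}_{|\mathcal{A}|}$ yields the desired identity.

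All the real content sits inside Lemma \ref{lemma:inc_exc_weight} and the Moore-Penrose identities already established above, so the present argument is essentially bookkeeping. The only point requiring care is that the block decomposition of $W$ must align with the ordering on $\mathrm{Ob}(G(F))$ fixed at the start of the section, so that splitting $\mathbf{1}^*_{|G(F)|}W$ into block contributions is legitimate; that ordering was chosen precisely to make this splitting immediate, so I do not anticipate any further obstacle.
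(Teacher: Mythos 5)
Your proposal is correct and follows essentially the same route as the paper: produce the weighting $C(\lambda_1 v_1,\dots,\lambda_m v_m)$ for $[G(F)]$ via Lemma \ref{lemma:inc_exc_weight} and pair it with $\mathbf{1}^*_{|G(F)|}$ using Lemma \ref{lemma_for_SLS} and the coweighting hypothesis on $[G(F)]$. The only (harmless) deviation is that you take $v_i=[F(a_i)]^+\mathbf{1}_{n_i}$ explicitly, so that $\mathbf{1}^*_{n_i}v_i=\chi(F(a_i))$ holds by definition, whereas the paper uses an arbitrary weighting $v_i$ and invokes Lemma \ref{lemma_for_SLS} together with the coweighting hypothesis on $[F(a_i)]$ for that step.
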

\begin{proof} Since $[\mathcal{A}]$ has a weighting, by Lemma \ref{lemma_for_existence_of_weighting}  we know that $[\mathcal{A}]^+\mathbf{1}_{|\mathcal{A}|}$ is  a weighting  for $[\mathcal{A}]$.
For some rational numbers $\lambda_1,\ \lambda_2,\dots ,\ \lambda_m$, we have  $$[\mathcal{A}]^+\mathbf{1}_{|\mathcal{A}|}=C(\lambda_1,\lambda_2,\dots, \lambda_m).$$	
Assume that $v_i $ is a weighting for $[F(a_i)]$ for every $i$ in $\{1,2,\dots m\}$.
Then by Lemma \ref{lemma:inc_exc_weight}, $C(\lambda_1 v_1,\lambda_2 v_2,\dots, \lambda_m v_m)$ is a weighting for $G(F)$. Hence by Lemma \ref{lemma_for_SLS}, we have
$$\chi (G(F))=\mathbf{1}_{|G(F)|}^*C(\lambda_1 v_1,\lambda_2 v_2,\dots, \lambda_m v_m).$$
Since for every object $a$ in $\mathcal{A}$, $[F(a)]$ has a weighting and a coweighting, we have
$$\chi(F(a_i))=\mathbf{1}_{n_i}^*v_i$$ by Lemma \ref{lemma_for_SLS}. Since $\mathbf{1}_{|G(F)|}^*=[\,\,\mathbf{1}_{n_1}^*\,\,\mathbf{1}_{n_2}^*\,\,\dots \,\, \mathbf{1}_{n_m}^*\,\,]$, we have
$$\chi (G(F))=\sum _{i=1}^{m}\lambda_i \mathbf{1}_{|F(a_i)|}^*v_i=\sum _{i=1}^{m}\lambda_i \chi(F(a_i))= \chi(F)[\mathcal{A}]^+\mathbf{1}_{|\mathcal{A}|}	.$$
\end{proof}

\begin{theorem}\label{thm:inc_exc}   Let $\mathcal{A}$ be a finite poset and  $F:\mathcal{A}\rightarrow \mathbf{Cat}$ a pseudofunctor such that for every object $a$ in $\mathcal{A}$, the category $F(a)$ is finite. Assume that  for every object $a$ in $\mathcal{A}$, $[F(a)]$ has a weighting. Then we have
	$$\chi (G(F))=\chi (F)[\mathcal{A}]^+\mathbf{1}_{|\mathcal{A}|}.$$
\end{theorem}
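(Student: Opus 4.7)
The plan is to exhibit an explicit weighting $y$ for $[G(F)]$, observe that its entry-sum already equals the right-hand side of the theorem, and then identify $y$ with $[G(F)]^{+}\mathbf{1}_{|G(F)|}$ by means of Lemma \ref{lemma_about_moore_penrose_inverse}. Since $\mathcal{A}$ is a poset, Remark \ref{remark_about_ext_order} lets us order its objects so that $[\mathcal{A}]$ is upper triangular with ones on the diagonal, hence invertible; in particular $[\mathcal{A}]^{+}\mathbf{1}_{|\mathcal{A}|}=[\mathcal{A}]^{-1}\mathbf{1}_{|\mathcal{A}|}$ is a weighting for $[\mathcal{A}]$, which I write as $C(\lambda_{1},\dots,\lambda_{m})$. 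For each $i$ the hypothesis together with Lemma \ref{lemma_for_existence_of_weighting} gives that $w_{i}:=[F(a_{i})]^{+}\mathbf{1}_{n_{i}}$ is a weighting for $[F(a_{i})]$, so Lemma \ref{lemma:inc_exc_weight} produces the weighting $y:=C(\lambda_{1}w_{1},\dots,\lambda_{m}w_{m})$ of $[G(F)]$. A block-wise calculation then gives
$$\mathbf{1}_{|G(F)|}^{*}y=\sum_{i=1}^{m}\lambda_{i}\,\mathbf{1}_{n_{i}}^{*}w_{i}=\sum_{i=1}^{m}\lambda_{i}\,\chi(F(a_{i}))=\chi(F)\,[\mathcal{A}]^{+}\mathbf{1}_{|\mathcal{A}|},$$
so it remains to verify $\chi(G(F))=\mathbf{1}_{|G(F)|}^{*}y$.

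To this end I will establish the stronger statement $y=[G(F)]^{+}\mathbf{1}_{|G(F)|}$ by checking the two conditions of Lemma \ref{lemma_about_moore_penrose_inverse}. The first is immediate: because $y$ is a weighting, $[G(F)]y=\mathbf{1}_{|G(F)|}$ already lies in $\mathrm{im}([G(F)])$, so $\mathrm{Proj}_{\mathrm{im}([G(F)])}\mathbf{1}_{|G(F)|}=\mathbf{1}_{|G(F)|}=[G(F)]y$. The second condition, $y^{*}\in\mathrm{Row}([G(F)])$, is where the poset hypothesis plays an essential role: by Lemma \ref{lemma_about_row_spaces} we have $\mathrm{Row}([G(F)])=\mathrm{Row}([G(L_{2}(F))])$, and by Lemma \ref{lemma_for_G_L_2_F} the latter is the row space of the block-diagonal matrix $\mathrm{diag}([F(a_{1})],\dots,[F(a_{m})])$, which decomposes as the direct sum of the $\mathrm{Row}([F(a_{i})])$. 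It therefore suffices to verify that each $\lambda_{i}w_{i}^{*}\in\mathrm{Row}([F(a_{i})])$, and this holds because $w_{i}=[F(a_{i})]^{+}\mathbf{1}_{n_{i}}\in\mathrm{im}([F(a_{i})]^{*})$ by Equations (\ref{eqn:2}) and (\ref{eqn:3}) --- equivalently, by the identification $[F(a_{i})]^{+}[F(a_{i})]=\mathrm{Proj}_{\mathrm{im}([F(a_{i})]^{*})}$ derived in the proof of Lemma \ref{lemma_about_moore_penrose_inverse}.

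The main obstacle, compared with Theorem \ref{thm:inc_exc_with_coweighting}, is that without a coweighting for $[G(F)]$ Lemma \ref{lemma_for_SLS} is unavailable; so knowing that $y$ is a weighting does not by itself force $\mathbf{1}^{*}y=\chi(G(F))$, since different weightings of $[G(F)]$ may have different entry-sums in general. We must genuinely pin $y$ down as \emph{the} weighting whose transpose sits in $\mathrm{Row}([G(F)])$, and the poset hypothesis --- entering through Lemma \ref{lemma_about_row_spaces} --- is exactly what controls this row space.
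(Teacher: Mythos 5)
Your proposal is correct and follows essentially the same route as the paper's own proof: exhibit the explicit weighting $C(\lambda_1 w_1,\dots,\lambda_m w_m)$ via Lemma \ref{lemma:inc_exc_weight}, use the poset hypothesis through Lemma \ref{lemma_about_row_spaces} (together with Lemma \ref{lemma_of_seperation}) to place its transpose in $\mathrm{Row}([G(F)])$, and then identify it with $[G(F)]^+\mathbf{1}_{|G(F)|}$ by Lemma \ref{lemma_about_moore_penrose_inverse}. Your extra details --- why $w_i^*\in\mathrm{Row}([F(a_i)])$, and computing $\mathbf{1}_{n_i}^*w_i=\chi(F(a_i))$ directly from the definition rather than via Lemma \ref{lemma_for_SLS} --- only make explicit what the paper leaves implicit.
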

\begin{proof}
	Since $\mathcal{A}$ is a finite poset, the matrix $[\mathcal{A}]$ is invertible and hence it has weighting. Therefore  $[\mathcal{A}]^+\mathbf{1}_{|\mathcal{A}|}$ is  a weighting for $[\mathcal{A}]$  by Lemma \ref{lemma_for_existence_of_weighting}. For some rational numbers $\lambda_1,\ \lambda_2,\dots ,\ \lambda_m$, we have  $$[\mathcal{A}]^+\mathbf{1}_{|\mathcal{A}|}=C(\lambda_1,\lambda_2,\dots, \lambda_m).$$
	For every object $a$ in $\mathcal{A}$, we know that $[F(a)]^+\mathbf{1}_{|F(a)|}$ is a weighting for $[F(a)]$  by  Lemma \ref{lemma_for_existence_of_weighting} since $[F(a)]$ has a weighting.
	Define $v_i=[F(a_i)]^+\mathbf{1}_{|F(a_i)|}$ for $i$ in $\{1,2,\dots,m\}$. Then by Lemma \ref{lemma:inc_exc_weight}, $C(\lambda_1 v_1,\lambda_2 v_2,\dots, \lambda_m v_m)$ is a weighting for $G(F)$. Notice that $C(0,\dots,0,v_i,0,\dots 0)^*$ is vector in $\mathrm{Row}(G(L_2(F)))$ due to Lemma \ref{lemma_for_G_L_2_F} and the fact that $v^*_i$ is in $\mathrm{Row}([F(a_i)])$ for each $i \in \{1,2, \dots , m\}$. Now by Lemma \ref{lemma_about_row_spaces}
	we know that $$\mathrm{Row}([G(L_1(F))][G(L_2(F))])=\mathrm{Row}([G(L_2(F))])$$ and by Lemma \ref{lemma_of_seperation} we know that
	$$[G(F)]=[G(L_1(F))][G(L_2(F))].$$
	Hence $C(\lambda_1 v_1,\lambda_2 v_2,\dots, \lambda_m v_m)^*$ is in $\mathrm{Row}(G(F))$.
	Therefore by Lemma \ref{lemma_about_moore_penrose_inverse} we have $$[G(F)]^+\mathbf{1}_{G(F)}=C(\lambda_1 v_1,\lambda_2 v_2,\dots, \lambda_m v_m).$$
	So as in the pevious proof we again have
    $$\chi (G(F))=\mathbf{1}_{|G(F)|}^*C(\lambda_1 v_1,\lambda_2 v_2,\dots, \lambda_m v_m).$$	
    Thus we are done by the same argument.	
\end{proof}

\begin{example}
Notice that the category $$\mathcal{P} =\{ b\leftarrow a \rightarrow c \} $$
is a poset and
$$[\mathcal{P}]=
\left[\begin{array}{rrr}
	1  & 1 & 1\\
	0  &  1 & 0 \\
	0 & 0  & 1
\end{array}\right]$$ and
$$[\mathcal{P}]^+=
\left[\begin{array}{rrr}
	1  & -1 & -1\\
	0  &  1 & 0 \\
	0 & 0  & 1
\end{array}\right].$$
Now consider any pseudofunctor $F:\mathcal{P}\rightarrow \mathbf{Cat}$  such that for every object $x$ in $\mathcal{P}$, the category $F(x)$ is finite and $[F(x)]$ has a weighting. Then we have
$$\chi (G(F))=\chi (F)[\mathcal{P}]^+\mathbf{1}_{|\mathcal{P}|}$$
 by Theorem \ref{thm:inc_exc}. Moreover, we have $$\chi (F)=[\,\,\chi (F(a))\,\,\chi (F(b))\,\,\chi (F(c))\,\, ]$$
and
$$[\mathcal{P}]^+\mathbf{1}_{|\mathcal{P}|}=\left[\begin{array}{rrr}
	1  & -1 & -1\\
	0  &  1 & 0 \\
	0 & 0  & 1
\end{array}\right]\left[\begin{array}{r}
 1\\
 1 \\
 1
\end{array}\right]=\left[\begin{array}{r}
-1\\
1 \\
1
\end{array}\right].$$
Hence
$$\chi (F)=[\,\,\chi (F(a))\,\,\chi (F(b))\,\,\chi (F(c))\,\, ]\left[\begin{array}{r}
	-1\\
	1 \\
	1
\end{array}\right]=\chi (F(b))+\chi (F(c))-\chi (F(a)).$$

\end{example}

Theorem  \ref{thm:inc_exc} doesn't imply that the Euler measure preserves products, so we prove this separately.

\begin{theorem}\label{thm:product} Let $\mathcal{A}$, $\mathcal{B}$ be two finite categories. Then we have
	$$\chi (\mathcal{A}\times \mathcal{B})=\chi (\mathcal{A})\chi(\mathcal{B})	.$$
\end{theorem}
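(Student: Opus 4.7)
My plan is to reduce everything to the Kronecker product identity $(M\otimes N)^+=M^+\otimes N^+$ already recorded in the excerpt.

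First I would fix an ordering on $\mathrm{Ob}(\mathcal{A}\times\mathcal{B})$: list the pairs lexicographically, i.e.\ if $\mathrm{Ob}(\mathcal{A})=\{a_1,\dots,a_m\}$ and $\mathrm{Ob}(\mathcal{B})=\{b_1,\dots,b_n\}$, order the pairs as $(a_1,b_1),(a_1,b_2),\dots,(a_1,b_n),(a_2,b_1),\dots$. Since in the product category the hom-set from $(a_i,b_k)$ to $(a_j,b_\ell)$ has cardinality $|\mathcal{A}(a_i,a_j)|\cdot|\mathcal{B}(b_k,b_\ell)|$, a direct inspection of the Kronecker product gives
\[[\mathcal{A}\times\mathcal{B}]=[\mathcal{A}]\otimes[\mathcal{B}].\]
The order-independence of $\chi$ established right after its definition lets me use this specific ordering without loss of generality.

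Next I would recall from the Kronecker subsection that $(M\otimes N)^+=M^+\otimes N^+$, and observe that the all-ones column also factors as a Kronecker product:
\[\mathbf{1}_{mn}=\mathbf{1}_m\otimes\mathbf{1}_n,\qquad \mathbf{1}_{mn}^*=\mathbf{1}_m^*\otimes\mathbf{1}_n^*.\]
Then, applying the mixed-product rule $(A\otimes B)(C\otimes D)=(AC)\otimes(BD)$ twice, I can compute
\[\chi(\mathcal{A}\times\mathcal{B})=\bigl(\mathbf{1}_m^*\otimes\mathbf{1}_n^*\bigr)\bigl([\mathcal{A}]^+\otimes[\mathcal{B}]^+\bigr)\bigl(\mathbf{1}_m\otimes\mathbf{1}_n\bigr)=\bigl(\mathbf{1}_m^*[\mathcal{A}]^+\mathbf{1}_m\bigr)\otimes\bigl(\mathbf{1}_n^*[\mathcal{B}]^+\mathbf{1}_n\bigr).\]
The right-hand side is a Kronecker product of two $1\times 1$ matrices, which coincides with the ordinary product $\chi(\mathcal{A})\chi(\mathcal{B})$ of rational numbers, giving the claim.

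There is no real obstacle here beyond verifying the two bookkeeping facts above; the only slightly subtle point is checking that the lexicographic ordering on pairs produces exactly the Kronecker block structure (rather than its transpose or a permutation of it), but this is immediate from the definition of $\otimes$ and only costs one sentence. Everything else is a direct application of identities already collected in the paper.
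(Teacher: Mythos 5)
Your proposal is correct and follows essentially the same route as the paper: both identify $[\mathcal{A}\times\mathcal{B}]=[\mathcal{A}]\otimes[\mathcal{B}]$, invoke $(M\otimes N)^+=M^+\otimes N^+$ together with $\mathbf{1}_{mn}=\mathbf{1}_m\otimes\mathbf{1}_n$, and finish with the mixed-product rule to identify the $1\times 1$ Kronecker product with the ordinary product of rationals. Your extra remark about the lexicographic ordering and order-independence of $\chi$ is a harmless (and slightly more careful) addition, not a different argument.
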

\begin{proof} Let $\mathcal{A}$, $\mathcal{B}$ be two finite categories.
	Then $$[\mathcal{A}\times \mathcal{B}]=[\mathcal{A}]\otimes[\mathcal{B}]$$
	where $\otimes$ denote Kronecker product.	
	Hence
	$$[\mathcal{A}\times  \mathcal{B}]^+=[\mathcal{A}]^+\otimes [\mathcal{B}]^+.$$
	Also notice
	$$\mathbf{1}_{|\mathcal{A}\times \mathcal{B}|}=\mathbf{1}_{|\mathcal{A}|}\otimes \mathbf{1}_{|\mathcal{B}|}.$$
Since in general we know $(M \otimes N)(L\otimes K) = (ML) \otimes (NK)$, we have   $$\mathbf{1}_{|\mathcal{A}\times \mathcal{B}|}^*[\mathcal{A}\times  \mathcal{B}]^+\mathbf{1}_{|\mathcal{A}\times  \mathcal{B}|}=
	\mathbf{1}_{|\mathcal{A}}^*[\mathcal{A}]^+\mathbf{1}_{|\mathcal{A}|}\otimes\mathbf{1}_{|\mathcal{B}|}^*[ \mathcal{B}]^+\mathbf{1}_{ \mathcal{B}|}.$$
	Here the operation $\otimes $ on the right-hand side of the equality above is in between two $1\times 1$-matrices. Hence it just corresponds to the multiplication of rational numbers.
\end{proof}

Now we  show that Euler measure is additive under disjoint unions.

\begin{theorem}\label{thm:disjoint} Let $\mathcal{A}$, $\mathcal{B}$ be two finite categories. Then we have
	$$\chi (\mathcal{A}\sqcup  \mathcal{B})=\chi (\mathcal{A})+\chi(\mathcal{B}).	$$
\end{theorem}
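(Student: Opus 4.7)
The proof will be a short block-matrix computation, and the hard part is merely organizing notation so that the decomposition is clean; there is no real obstacle.

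First I would observe that in the disjoint union $\mathcal{A}\sqcup\mathcal{B}$ there are no morphisms between objects of $\mathcal{A}$ and objects of $\mathcal{B}$. Ordering the objects of $\mathcal{A}\sqcup\mathcal{B}$ so that the objects of $\mathcal{A}$ come first and the objects of $\mathcal{B}$ come second, the adjacency matrix therefore has the block-diagonal form
$$[\mathcal{A}\sqcup\mathcal{B}]=\mathrm{diag}\bigl([\mathcal{A}],[\mathcal{B}]\bigr).$$
This ordering is harmless by the permutation-invariance of $\chi$ established just after its definition.

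Next I would invoke the identity $\mathrm{diag}(A_1,\dots,A_n)^+=\mathrm{diag}(A_1^+,\dots,A_n^+)$ recorded at the end of Section 2.1 to obtain
$$[\mathcal{A}\sqcup\mathcal{B}]^+=\mathrm{diag}\bigl([\mathcal{A}]^+,[\mathcal{B}]^+\bigr).$$
Since the all-ones column vector of size $|\mathcal{A}|+|\mathcal{B}|$ also splits into blocks as $\mathbf{1}_{|\mathcal{A}\sqcup\mathcal{B}|}=C(\mathbf{1}_{|\mathcal{A}|},\mathbf{1}_{|\mathcal{B}|})$, plugging everything into the definition $\chi(\mathcal{C})=\mathbf{1}_{|\mathcal{C}|}^*[\mathcal{C}]^+\mathbf{1}_{|\mathcal{C}|}$ and carrying out the $2\times 2$ block multiplication gives
$$\chi(\mathcal{A}\sqcup\mathcal{B})=\mathbf{1}_{|\mathcal{A}|}^*[\mathcal{A}]^+\mathbf{1}_{|\mathcal{A}|}+\mathbf{1}_{|\mathcal{B}|}^*[\mathcal{B}]^+\mathbf{1}_{|\mathcal{B}|}=\chi(\mathcal{A})+\chi(\mathcal{B}),$$
which is the desired equality. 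The whole argument is a one-step reduction to the block-diagonal Moore--Penrose identity, so no separate lemma is required.
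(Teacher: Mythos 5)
Your proposal is correct and follows the paper's own proof essentially verbatim: both use the block-diagonal identity $[\mathcal{A}\sqcup\mathcal{B}]=\mathrm{diag}([\mathcal{A}],[\mathcal{B}])$, the fact that $\mathrm{diag}(A_1,\dots,A_n)^+=\mathrm{diag}(A_1^+,\dots,A_n^+)$, and the splitting $\mathbf{1}_{|\mathcal{A}\sqcup\mathcal{B}|}=C(\mathbf{1}_{|\mathcal{A}|},\mathbf{1}_{|\mathcal{B}|})$ to reduce to a block computation. Your added remark on permutation-invariance of the object ordering is a harmless extra justification.
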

\begin{proof} Let $\mathcal{A}$, $\mathcal{B}$ be two finite categories.
	Then $$[\mathcal{A}\sqcup  \mathcal{B}]=\mathrm{diag}([\mathcal{A}],[\mathcal{B}]).$$
	Hence
	$$[\mathcal{A}\sqcup  \mathcal{B}]^+=\mathrm{diag}([\mathcal{A}]^+,[\mathcal{B}]^+).$$
Also notice
 $$\mathbf{1}_{|\mathcal{A}\sqcup  \mathcal{B}|}=C(\mathbf{1}_{|\mathcal{A}|},\mathbf{1}_{|\mathcal{B}|}).$$
So $$\mathbf{1}_{|\mathcal{A}\sqcup  \mathcal{B}|}^*[\mathcal{A}\sqcup  \mathcal{B}]^+\mathbf{1}_{|\mathcal{A}\sqcup  \mathcal{B}|}=
\mathbf{1}_{|\mathcal{A}}^*[\mathcal{A}]^+\mathbf{1}_{|\mathcal{A}|}+\mathbf{1}_{|\mathcal{B}|}^*[ \mathcal{B}]^+\mathbf{1}_{ \mathcal{B}|}.$$
\end{proof}

There are many artificially created examples of categories with adjacency matrix having a weighting and no coweighting. For the following examples, we will use Corollary 4.2 in \cite{berger_leinster}.

\begin{example}\label{EX1}
	Let $\mathcal{C}_1$ be a category with adjacency matrix $$[\mathcal{C}_1]=\left[\begin{array}{rr}
		3 & 2 \\
		3 & 2
	\end{array}\right].$$Clearly $[\mathcal{C}_1]$ admits a weighting and no coweighting. By using the algorithim in the Appendix, we find its Moore-Penrose inverse as $$[\mathcal{C}_1]^+ =\left[\begin{array}{rrr}
	3/26 & 3/26 \\
	1/13 & 1/13
\end{array}\right].$$ So its Euler measure is $\chi(\mathcal{C}_1) = 5/13.$
\end{example}

\begin{example}\label{EX2}
Consider the category $\mathcal{C}_1$ in Example \ref{EX1}. Let $\mathcal{C}_2$ be the category obtained by adding an isomorphic copy of the second object of $\mathcal{C}_1$ to itself as a third object. Then $$[\mathcal{C}_2] =\left[\begin{array}{rrr}
	3 & 2 & 2 \\
	3 & 2 & 2 \\
	3 & 2 & 2
\end{array}\right].$$ Observe that $[\mathcal{C}_2]$ has a weighting and no coweighting. Then by using the algorithm to find Moore-Penrose inverse, we get $$[\mathcal{C}_2]^+ =\left[\begin{array}{rrr}
	1/17 & 1/17 & 1/17 \\
	2/51 & 2/51 & 2/51 \\
	2/51 & 2/51 & 2/51
\end{array}\right].$$ So its Euler measure $\chi(\mathcal{C}_2) = 7/17.$
\end{example}

Clearly $\mathcal{C}_1$ and $\mathcal{C}_2$ are equivalent categories. However, $\chi(\mathcal{C}_1) \neq \chi(\mathcal{C}_2)$. Thus the Euler measure $\chi$ is not invariant under equivalence of categories.

\begin{example}\label{EX3}
	Consider the catories $\mathcal{C}_1$ and $\mathcal{C}_2$ in Example \ref{EX2}. Let $F: \mathcal{C}_1\rightarrow \mathbf{Cat}$ be a functor mapping the first object to the terminal category $\mathbf{*}$ and second object to the category $\mathcal{B}$ whose adjacency matrix is $$[\mathcal{B}]=\left[\begin{array}{rr}
1 & 1 \\
1 & 1
\end{array}\right].$$ Observe that $$[G(F)] =\left[\begin{array}{rrr}
	3 & 2 & 2 \\
	3 & 2 & 2 \\
	3 & 2 & 2
\end{array}\right]$$ and it is independent of the choice of the image of the morphisms of the category $\mathcal{C}_1$ under $F$. Thus $\chi(G(F))= 7/17$ as in Example \ref{EX2}. On the other hand $$\chi (F)=\left[\,\, \chi(*)\,\, \chi(\mathcal{B})\,\, \right]=\left[\,\,  1 \,\, 1 \,\, \right]$$ and $$[\mathcal{C}_1]^+ =\left[\begin{array}{rrr}
3/26 & 3/26 \\
1/13 & 1/13
\end{array}\right].$$
By applying the formula in Theorem \ref{thm:inc_exc}, we get $$\chi (F)[\mathcal{C}_1]^+\mathbf{1}_{|\mathcal{C}_1|} =\left[\,\,  1 \,\, 1 \,\, \right] \left[\begin{array}{rrr}
	3/26 & 3/26 \\
	1/13 & 1/13
\end{array}\right] \left[\begin{array}{r}
1\\
1
\end{array}\right] = \frac{5}{13} \neq \frac{7}{17} = \chi(G(F)).$$ Hence Theorem \ref{thm:inc_exc} is not true without the assumption that the index category $\mathcal{A}$ is a poset.

	\end{example}

\section{Appendix}\label{sec:appendix}
The goal of this appendix is to explain why $\chi(\mathcal{A})$ is a rational number for every category $\mathcal{A}$. For this purpose we give an algorithm for computing the Moore-Penrose inverse (also known as pseudo inverse) of a square matrix. 

There are many different algorithms to compute Moore-Penrose inverse in the literature. Some of the most popular methods for computing the Moore-Penrose inverse are based on the singular value decomposition (SVD) of matrices as the method used by Penrose in \cite{penrose}. However, by using this method we cannot prove that the entries of the Moore-Penrose inverse of the incidence matrix of a category are rational.

Another common tool for this purpose is the full rank decomposition of matrices (See Theorem 5, page: 48 in \cite{israel}). Using this method one can see that the entries of the Moore-Penrose inverse of the incidence matrix of a category are all rational. 

There are also several algorithms (See \cite{chen}, \cite{murray-lasso}, \cite{davis} and \cite{ji}) which compute the Moore-Penrose inverse by generalizing the normal equation method. We also give an algorithm that generalizes normal equation method here. Although this is not a new method, we give it here to stress out why the entries of the matrices are all rational numbers that appear during the computations.

Let $C_m$ denote the vector space of all $m\times 1$-matrices with rational entries. Let $M$ be an $m\times m$ matrix with rational entries. Then the vector spaces 
$\mathrm{Null}(M)=\{\,x\in C_m\,|\,Mx=0\,\}$, 
$\mathrm{im}(M)=\{\,Mx\,|\,x\in C_m\,\}$, $\mathrm{Row}(M)=\{\,x^*\,|\,x\in \mathrm{im}(M^*)\,\}$,
and $\mathrm{im}(M)^{\perp} = \mathrm{Null}(M^*)$ all have bases that consists of matrices with rational entries.
Here we will discuss an algorithm to compute the Moore-Penrose inverse $M^+$ of a matrix $M$. The main theorem of this subsection is the following.

\begin{theorem}
	Let $M$ be an $m\times m$-matrix. Assume that $\{r_1,r_2,\dots,r_k\}$ is a basis for $\mathrm{Row}(M)$ and $\{s_1,s_2,\dots,s_{m-k}\}$ is a basis for $\mathrm{im}(M)^{\perp}$. Then the matrix $$\left[\,\, Mr_1^* \,\, Mr_2^* \, \cdots  \,  Mr_k^* \,\, s_1 \,\, s_2 \, \cdots \, s_{m-k} \,\,\right]$$ is invertible and we have
	$$M^+=\left[\,\, r_1^* \, r_2^* \, \cdots \,  r_k^* \,\, \mathbf{0}\,\, \right]\left[\,\, Mr_1^* \,\, Mr_2^* \, \cdots \, Mr_k^* \,\, s_1 \,\, s_2 \, \cdots \, s_{m-k} \, \right]^{-1}$$
	where $\mathbf{0}$ is zero matrix of size $m\times (m-k)$.
\end{theorem}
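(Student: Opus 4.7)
The plan is to establish the result in two stages: first show that the block matrix $N = \left[\, Mr_1^* \cdots Mr_k^* \,\, s_1 \cdots s_{m-k}\,\right]$ is invertible, then verify the identity $M^+ = AN^{-1}$ with $A = \left[\, r_1^* \cdots r_k^* \,\, \mathbf{0}\,\right]$, which I would rewrite as $M^+ N = A$ and check column by column. The only machinery needed is already packaged inside the proof of Lemma \ref{lemma_about_moore_penrose_inverse}, namely the two projection identities $M^+M = \mathrm{Proj}_{\mathrm{im}(M^*)}$ and $MM^+ = \mathrm{Proj}_{\mathrm{im}(M)}$, together with the orthogonal decomposition $C_m = \mathrm{im}(M) \oplus \mathrm{im}(M)^{\perp}$.

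For invertibility, first I would note that $\{r_1^*,\ldots,r_k^*\}$ is a basis of $\mathrm{im}(M^*) = \mathrm{Null}(M)^{\perp}$ by definition of the row space. Since $\mathrm{Null}(M)\cap \mathrm{Null}(M)^{\perp} = \{0\}$, the restriction of $M$ to $\mathrm{Null}(M)^{\perp}$ is injective, so $\{Mr_1^*,\ldots,Mr_k^*\}$ is linearly independent in $\mathrm{im}(M)$. Since $k = \mathrm{rank}(M) = \dim\mathrm{im}(M)$, these $k$ vectors form a basis of $\mathrm{im}(M)$. Combined with the basis $\{s_1,\ldots,s_{m-k}\}$ of $\mathrm{im}(M)^{\perp}$, the columns of $N$ form a basis of $C_m$, so $N$ is invertible.

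For the formula, I would check $M^+N = A$ on each column of $N$. On the first $k$ columns, $M^+(Mr_i^*) = \mathrm{Proj}_{\mathrm{im}(M^*)}(r_i^*) = r_i^*$ because $r_i^* \in \mathrm{im}(M^*)$. On the last $m-k$ columns, rewriting (\ref{eqn:2}) as $M^+ = M^+(MM^+) = M^+\,\mathrm{Proj}_{\mathrm{im}(M)}$ gives $M^+ s_j = M^+\,\mathrm{Proj}_{\mathrm{im}(M)}(s_j) = 0$ since $s_j \in \mathrm{im}(M)^{\perp}$. Reading these column by column yields $M^+N = A$, and multiplying on the right by $N^{-1}$ gives the stated formula. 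There is no real obstacle here; the only thing to be careful about is the bookkeeping between row vectors $r_i \in \mathrm{Row}(M)$ and their column transposes $r_i^* \in \mathrm{im}(M^*)$. For the rationality conclusion that motivates the appendix, it is worth emphasizing that both $\mathrm{Row}(M)$ and $\mathrm{im}(M)^{\perp} = \mathrm{Null}(M^*)$ admit rational bases when $M$ is rational (by Gaussian elimination), so $N$ has rational entries, $N^{-1}$ does too, and hence $M^+ = AN^{-1}$ is rational as required for the application to $\chi(\mathcal{A})$.
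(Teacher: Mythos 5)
Your proof is correct, but it takes a different route from the paper. The paper never touches $M^+$ directly: it treats $A_M=\left[\, r_1^*\,\cdots\,r_k^*\;\mathbf{0}\,\right]$ and $B_M=N^{-1}$ as a candidate and verifies the four defining Penrose equations (\ref{eqn:1})--(\ref{eqn:4}) for $A_MB_M$ by evaluating both sides on the two bases $\{r_1^*,\dots,r_k^*,t_1,\dots,t_{m-k}\}$ (with $t_i$ a basis of $\mathrm{Null}(M)$) and $\{Mr_1^*,\dots,Mr_k^*,s_1,\dots,s_{m-k}\}$, then invokes uniqueness of the Moore--Penrose inverse; invertibility of $N$ is simply asserted from the fact that $\{Mr_i^*\}$ is a basis of $\mathrm{im}(M)$. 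You instead take the existence of $M^+$ as given and use the projection identities $M^+M=\mathrm{Proj}_{\mathrm{im}(M^*)}$ and $MM^+=\mathrm{Proj}_{\mathrm{im}(M)}$ (which the paper does establish inside the proof of Lemma \ref{lemma_about_moore_penrose_inverse}, so the citation is legitimate) to compute $M^+N=A$ column by column and conclude $M^+=AN^{-1}$. Your argument is shorter and more conceptual, and it spells out the invertibility of $N$ (injectivity of $M$ on $\mathrm{Null}(M)^{\perp}$ plus the rank count) more carefully than the paper does; the paper's verification-of-the-four-equations route is longer but has the virtue of not presupposing anything about $M^+$ beyond the uniqueness statement, so it simultaneously reconstructs the pseudoinverse and exhibits its rationality, which is the point the appendix emphasizes. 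Both proofs are sound, and your closing remark on rational bases via Gaussian elimination matches the rationality discussion the paper places just before the theorem.
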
	
First of all the matrix $\left[\,\, Mr_1^* \,\, Mr_2^* \, \cdots Mr_k^* \,\, s_1 \,\, s_2 \, \cdots \, s_{m-k} \,\,\right]$ is invertible because $\{Mr_1^* , Mr_2^* , \dots , Mr_k^*\}$ is a basis for $\mathrm{im}(M)$ and $\{s_1 ,s_2 , \dots ,s_{m-k}\}$ is a basis for $\mathrm{im}(M)^{\perp }$. The rest of the proof of the above theorem directly follows from the next four lemmas. First we fix some notation that is used in the rest of this subsection.
Assume that $M$ is an $m\times m$-matrix,  $\{s_1,s_2,\dots,s_{m-k}\}$ is a basis for $\mathrm{im}(M)^{\perp}$, and $\{r_1,r_2,\dots,r_k\}$ is a basis for $\mathrm{Row}(M)$. Then let   $A_M$ be the $m\times m$-matrix $$A_M=\left[\,\, r_1^* \, r_2^* \,\cdots \, r_k^* \,\, \mathbf{0}\,\, \right]$$ where $\mathbf{0}$ is zero matrix of size $m\times (m-k)$. Also let $B_M$ be the  $m\times m$-matrix $$B_M=\left[\,\, Mr_1^* \,\, Mr_2^* \, \cdots \, Mr_k^* \,\, s_1 \,\, s_2 \, \cdots \, s_{m-k} \,\,\right]^{-1}.$$
Now notice that the above theorem says $M^+=A_MB_M$. Hence the next four lemmas finishes the proof of the above theorem.

Note that two $n\times m$ matrices are equal to each other if we consider them as linear functions whose domains are $C_m$ and they agree on a basis of $C_m$.
Let $\{e_1,e_2,\dots e_m\}$ be the standard basis of the vector space $C_m$
and $\{t_1,t_2,\dots,t_{m-k}\}$ a basis for $\mathrm{Null}(M)$. Then
$$\beta_1= \{ r_1^* , r_2^* , \dots, r_k^* , t_1, t_2 , \dots , t_{m-k}\}$$
is basis for $C_m$.
\begin{lemma}$MA_MB_MM=M.$
\end{lemma}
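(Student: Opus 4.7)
The plan is to verify the claimed identity by showing that the two linear maps $MA_MB_MM$ and $M$ from $C_m$ to $C_m$ agree on the basis $\beta_1=\{r_1^*,r_2^*,\dots,r_k^*,t_1,t_2,\dots,t_{m-k}\}$ of $C_m$. Since $\mathrm{Row}(M)$ as column vectors is $\mathrm{im}(M^*)=\mathrm{Null}(M)^{\perp}$, the spans of $\{r_i^*\}$ and $\{t_j\}$ are complementary in $C_m$, so $\beta_1$ is indeed a basis and it suffices to check the two cases.

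For the vectors $t_j \in \mathrm{Null}(M)$, the rightmost factor $M$ annihilates the input, so both sides give $0$; this handles half the basis immediately with no work.

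For the vectors $r_i^*$, the key observation is that $Mr_i^*$ is precisely the $i$-th column of the matrix
$$\left[\,\, Mr_1^* \,\, Mr_2^* \, \cdots  \,  Mr_k^* \,\, s_1 \,\, s_2 \, \cdots \, s_{m-k} \,\,\right],$$
whose inverse is $B_M$. Hence $B_M(Mr_i^*)=e_i$, the $i$-th standard basis vector of $C_m$. Then applying $A_M$, whose $i$-th column is $r_i^*$ for $i\leq k$, gives $A_Me_i=r_i^*$. Multiplying by $M$ on the left yields $MA_MB_M Mr_i^* = Mr_i^*$, as desired.

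The argument is essentially bookkeeping: the only substantive point is identifying $Mr_i^*$ as a column of the matrix that $B_M$ inverts, which makes the telescoping $B_M\circ M\mapsto e_i$, $A_M\circ e_i\mapsto r_i^*$ immediate. I do not anticipate any real obstacle; verifying invertibility of the matrix whose inverse is $B_M$ was already noted just before the lemma, so that step is available for free.
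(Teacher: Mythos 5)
Your proof is correct and follows essentially the same route as the paper: check the identity on the basis $\beta_1=\{r_1^*,\dots,r_k^*,t_1,\dots,t_{m-k}\}$, using $B_M(Mr_i^*)=e_i$ and $A_Me_i=r_i^*$ for the row-space vectors and $Mt_j=0$ for the null-space vectors. Your added remark that $\mathrm{Row}(M)$ viewed as columns is $\mathrm{Null}(M)^{\perp}$, so $\beta_1$ really is a basis, is a small justification the paper leaves implicit.
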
	
\begin{proof} We know $\beta _1$ is a basis for $C_m$ and we have
	$$MA_MB_MMr_i^*=MA_Me_i=Mr_i^*$$ 	
	for $1\leq i\leq k$ and
	$$MA_MB_MMt_i=MA_Me_{k+i}=M0=0=Mt_i $$ 	
	for $1\leq i\leq m-k$.
\end{proof}

Now notice that the set
$$\beta_2=\{ Mr_1^* , Mr_2^* , \dots, Mr_k^* , s_1, s_2 , \dots , s_{m-k}\}$$
is another basis for $C_m$.

\begin{lemma}$A_MB_MMA_MB_M=A_MB_M$.
\end{lemma}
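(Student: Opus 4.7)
The plan is to mimic the strategy of the previous lemma: show that the two $m\times m$ matrices $A_M B_M M A_M B_M$ and $A_M B_M$ agree as linear operators on $C_m$ by checking that they agree on the basis
\[
\beta_2 = \{\, Mr_1^*, Mr_2^*, \dots, Mr_k^*, s_1, s_2, \dots, s_{m-k} \,\}.
\]
Since $\beta_2$ has already been identified as a basis of $C_m$ just before the statement, two matrices that agree on it must be equal.

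To carry this out, I first read off the action of $B_M$ on $\beta_2$ directly from its definition. Because $B_M$ is the inverse of the matrix whose columns are precisely the vectors of $\beta_2$ (in the given order), we have $B_M(Mr_i^*) = e_i$ for $1 \le i \le k$ and $B_M(s_j) = e_{k+j}$ for $1 \le j \le m-k$, where $\{e_1,\dots,e_m\}$ is the standard basis. Next, from the definition $A_M = [\, r_1^*\ r_2^*\ \cdots\ r_k^*\ \mathbf{0}\, ]$ it is immediate that $A_M e_i = r_i^*$ for $1 \le i \le k$ and $A_M e_{k+j} = 0$ for $1 \le j \le m-k$. Composing these two observations yields
\[
A_M B_M (Mr_i^*) = r_i^* \quad \text{and} \quad A_M B_M (s_j) = 0.
\]

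Now I evaluate the left-hand side $A_M B_M M A_M B_M$ on each vector of $\beta_2$. For $1 \le i \le k$,
\[
A_M B_M M A_M B_M (Mr_i^*) = A_M B_M M (r_i^*) = A_M B_M (Mr_i^*) = r_i^*,
\]
which equals the value of $A_M B_M$ on $Mr_i^*$ computed above. For $1 \le j \le m-k$,
\[
A_M B_M M A_M B_M (s_j) = A_M B_M M (0) = 0,
\]
which again equals $A_M B_M(s_j) = 0$. So the two operators agree on $\beta_2$ and hence on all of $C_m$, proving $A_M B_M M A_M B_M = A_M B_M$.

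There is no real obstacle here; the only thing to be careful about is reading off the action of $B_M$ on $\beta_2$ correctly, since this is what makes the argument go through without any calculation. The structural point is that the definitions of $A_M$ and $B_M$ were tailored precisely so that $A_M B_M$ acts as the identity on $\mathrm{im}(M)$ (via the basis $Mr_i^*$) and as zero on $\mathrm{im}(M)^{\perp}$ (via the basis $s_j$); once this is recognized, the identity $(A_M B_M) M (A_M B_M) = A_M B_M$ reduces to the observation that an extra application of $M$ followed by $A_M B_M$ simply reproduces the value already obtained.
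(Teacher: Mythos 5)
Your proof is correct and follows essentially the same route as the paper: both verify the identity on the basis $\beta_2=\{Mr_1^*,\dots,Mr_k^*,s_1,\dots,s_{m-k}\}$, using that $B_M$ sends these basis vectors to the standard basis and that $A_M$ sends $e_i$ to $r_i^*$ (resp.\ $0$). The only difference is cosmetic: you simplify $A_MB_M(Mr_i^*)$ all the way to $r_i^*$, while the paper stops at $A_MB_MMr_i^*$; the underlying computation is identical.
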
	
\begin{proof} We know $\beta _2$ is a basis for $C_m$ and we have
	$$A_MB_MMA_MB_MMr_i^*=A_MB_MMA_Me_i=A_MB_MMr_i^*=A_MB_MMr_i^*$$ 	
	for $1\leq i\leq k$ and
	$$A_MB_MMA_MB_Ms_i=A_MB_MMA_Me_{k+i}=A_MB_MM0=0=A_Me_{k+i}= A_MB_Ms_i$$ 	
	for $1\leq i\leq m-k$.
\end{proof}

The above two lemmas prove that $A_MB_M$ satisfies the first two equations. We know that if $Z$ is an invertible matrix and $X$ is a matrix such that $Z^* XZ = Z^* X^* Z$, then $X=X^*$. Now we prove the third equation.

\begin{lemma}$(A_MB_MM)^*=A_MB_MM$.
\end{lemma}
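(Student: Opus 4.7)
The plan is to apply the criterion stated just before the lemma: it suffices to exhibit an invertible matrix $Z$ for which $Z^{*}(A_MB_MM)Z$ is symmetric, since then $Z^{*}(A_MB_MM)Z = (Z^{*}(A_MB_MM)Z)^{*} = Z^{*}(A_MB_MM)^{*}Z$, and cancelling $Z$ on the right and $Z^{*}$ on the left gives the claim. So the whole proof reduces to choosing $Z$ cleverly and verifying symmetry of the conjugated form.

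The natural choice is the matrix whose columns are the basis $\beta_1$, namely
$$Z = \left[\,\, r_1^{*} \,\, r_2^{*} \, \cdots \, r_k^{*} \,\, t_1 \,\, t_2 \, \cdots \, t_{m-k} \,\,\right];$$
this is invertible because $\beta_1$ spans $C_m$. The key computation is of the columns of $(A_MB_MM)Z$, and it reuses exactly the two identities used in the proof of the first lemma of this sequence: for $1\le i\le k$ one has $A_MB_MM r_i^{*} = A_M e_i = r_i^{*}$, while for $1\le j\le m-k$ one has $A_MB_MM t_j = A_MB_M\cdot 0 = 0$. Consequently
$$(A_MB_MM)Z = \left[\,\, r_1^{*} \, r_2^{*} \, \cdots \, r_k^{*} \,\, \mathbf{0}\,\,\right].$$

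It then remains to show that $Z^{*}(A_MB_MM)Z$ is symmetric. Its last $m-k$ columns are zero by the display above, and the entries in its last $m-k$ rows of the first $k$ columns have the form $t_i^{*}r_j^{*}$; since each $r_j^{*}$ lies in $\mathrm{im}(M^{*}) = \mathrm{Null}(M)^{\perp}$ and each $t_i$ lies in $\mathrm{Null}(M)$, these entries all vanish. What is left is the $k\times k$ block in the upper-left corner whose $(i,j)$-entry is $r_i r_j^{*}$, that is, the Gram matrix $R^{*}R$ where $R = [\,r_1^{*}\,\cdots\, r_k^{*}\,]$, and any Gram matrix is manifestly symmetric.

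The only nontrivial ingredient is the orthogonality $\mathrm{im}(M^{*}) \perp \mathrm{Null}(M)$, but this is standard and has already been invoked in the proof of Lemma~\ref{lemma_about_moore_penrose_inverse}. I therefore do not expect any real obstacle; the argument is just careful bookkeeping once $Z$ is chosen to align with the basis $\beta_1$ on which $A_MB_MM$ acts as the identity on the $r_i^{*}$ and as zero on the $t_j$.
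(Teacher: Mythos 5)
Your proof is correct and follows essentially the same route as the paper: both conjugate $A_MB_MM$ by the matrix whose columns are the basis $\beta_1$, use the identities $A_MB_MMr_i^*=r_i^*$ and $A_MB_MMt_j=0$ together with the orthogonality of $\mathrm{Null}(M)$ and $\mathrm{Row}(M)$, and then invoke the cancellation criterion stated before the lemma. Your packaging of the upper-left block as a Gram matrix is just a slightly tidier way of stating the paper's entrywise check that $r_j(A_MB_MM)r_i^*=r_jr_i^*=r_j(A_MB_MM)^*r_i^*$.
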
	
\begin{proof} We know $\beta _1$ is a basis of $C_m$ and we have
	$$r_jA_MB_MMr_i^*=r_jA_Me_i=r_jr_i^*$$
	and
	$$r_j(A_MB_MM)^*r_i^*=(A_MB_MMr_j^*)^*r_i^*=(A_Me_j)^*r_i^*=(r_j^*)^*r_i^*=r_jr_i^*$$ 	
	for $1\leq i\leq k$ and
	$$Mt_i=0\text{\ \  and \ \ }t_i^*A_M=0$$
	for $1\leq i\leq m-k$.
\end{proof}

Finally we prove the last equation

\begin{lemma}$(MA_MB_M)^*=MA_MB_M$
\end{lemma}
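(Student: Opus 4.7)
The plan is to mimic the proof of the previous (third) lemma, but now test equality of the two matrices on the basis $\beta_2 = \{Mr_1^*, Mr_2^*, \dots, Mr_k^*, s_1, s_2, \dots, s_{m-k}\}$ of $C_m$ rather than on $\beta_1$. The first step is to evaluate $MA_MB_M$ on the elements of $\beta_2$. Since $B_M$ is by construction the inverse of the matrix whose columns are the vectors of $\beta_2$, we get $B_M(Mr_i^*) = e_i$ for $1 \le i \le k$ and $B_M s_i = e_{k+i}$ for $1 \le i \le m-k$. Applying $MA_M$ then gives $MA_MB_M(Mr_i^*) = Mr_i^*$ and $MA_MB_M s_i = 0$.

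Next I would compare matrix entries of $MA_MB_M$ and $(MA_MB_M)^*$ in the basis $\beta_2$. For $1 \le i, j \le k$, both $(Mr_i^*)^*\, MA_MB_M\,(Mr_j^*)$ and $(Mr_i^*)^*\,(MA_MB_M)^*\,(Mr_j^*) = (MA_MB_M\, Mr_i^*)^*\, Mr_j^*$ collapse to the scalar $r_i M^* M r_j^*$. Whenever one of the two inputs is some $s_\ell$, both entries vanish: on one side because $MA_MB_M s_\ell = 0$, and on the other because $s_\ell^* M = 0$, which holds since $s_\ell \in \mathrm{im}(M)^\perp = \mathrm{Null}(M^*)$ as recorded in the opening of the appendix. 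Agreement on all pairs from the basis $\beta_2$ then gives the lemma.

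Equivalently, one can package this via the invertible-matrix trick recalled just before the third lemma. Let $Z$ be the matrix whose columns are the vectors of $\beta_2$, so that $Z$ is invertible and $Z^{-1} = B_M$. Then $MA_MB_M Z = MA_M = [\,Mr_1^* \; \cdots \; Mr_k^* \;\; \mathbf{0}\,]$, and the product $Z^* MA_M$ is a block matrix whose upper-left $k \times k$ block is $[\,r_i M^* M r_j^*\,]_{i,j}$ and whose remaining blocks vanish (the bottom-left block by $s_\ell^* M = 0$). That $k \times k$ block is manifestly symmetric, so $Z^* MA_MB_M Z$ is symmetric, hence equals $Z^*(MA_MB_M)^* Z$, and the cited fact yields $MA_MB_M = (MA_MB_M)^*$.

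The only nontrivial input is the identity $s_\ell^* M = 0$, i.e., $\mathrm{im}(M)^\perp = \mathrm{Null}(M^*)$, which is already stated in the appendix; this is what causes the off-diagonal blocks to vanish and produces the symmetry. The rest is direct bookkeeping that parallels the proof of the third equation, so I do not expect a real obstacle.
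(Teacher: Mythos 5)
Your proposal is correct and takes essentially the same route as the paper: the paper's proof also tests the symmetry of $MA_MB_M$ against the basis $\beta_2$, computing $(Mr_j^*)^*MA_MB_M(Mr_i^*)=(Mr_j^*)^*Mr_i^*$ and disposing of the $s_\ell$ directions via $A_MB_Ms_\ell=0$ and $s_\ell^*M=0$, with the conclusion drawn from the invertible-matrix fact recalled before the third lemma. Your block-matrix packaging with $Z$ and $Z^{-1}=B_M$ is just a slightly more explicit write-up of the same argument.
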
	
\begin{proof} We know $\beta _2$ is a basis of $C_m$ and we have
	$$(Mr_j^*)^*MA_MB_M(Mr_i^*)=(Mr_j^*)^*Mr_i^*$$
	and
	$$ (Mr_j^*)^*(MA_MB_M)^*(Mr_i^*)=(MA_MB_MMr_j^*)^*r_iM^*=(Mr_j^*)^* Mr_i^*$$ 	
	for $1\leq i\leq k$ and
	$$A_MB_Ms_i=A_Me_{k+i}=0 \text{\ \  and \ \ }s_i^*M=0 $$ 	
	for $1\leq i\leq m-k$.
\end{proof}

\section*{Acknowledgements}
The authors wish to express their gratitude to the referee for providing valuable comments and suggestions that have significantly enhanced the quality of this paper. The first author would like to thank Matthew Gelvin for helpful discussions and motivating to study this topic.


\begin{bibdiv}
\begin{biblist}

\bib{penrose}{article}{
   author={Penrose, R.},
   title={A generalized inverse for matrices},
   journal={Proc. Cambridge Philos. Soc.},
   volume={51},
   date={1955},
   pages={406--413},
   issn={0008-1981},
   review={\MR{0069793}},
}

\bib{BarrKennisonRapael}{article}{
   author={Barr, Michael},
   author={Kennison, John F.},
   author={Raphael, Robert},
   title={Contractible simplicial objects},
   journal={Comment. Math. Univ. Carolin.},
   volume={60},
   date={2019},
   number={4},
   pages={473--495},
   issn={0010-2628},
   review={\MR{4061357}},
}

\bib{berger_leinster}{article}{
   author={Berger, Clemens},
   author={Leinster, Tom},
   title={The Euler characteristic of a category as the sum of a divergent
   series},
   journal={Homology Homotopy Appl.},
   volume={10},
   date={2008},
   number={1},
   pages={41--51},
   issn={1532-0073},
   review={\MR{2369022}},
}

\bib{LuckEuler}{article}{
   author={Fiore, Thomas M.},
   author={L\"{u}ck, Wolfgang},
   author={Sauer, Roman},
   title={Euler characteristics of categories and homotopy colimits},
   journal={Doc. Math.},
   volume={16},
   date={2011},
   pages={301--354},
   issn={1431-0635},
   review={\MR{2804514}},
}

\bib{LuckFiniteness}{article}{
   author={Fiore, Thomas M.},
   author={L\"uck, Wolfgang},
   author={Sauer, Roman},
   title={Finiteness obstructions and Euler characteristics of categories},
   journal={Adv. Math.},
   volume={226},
   date={2011},
   number={3},
   pages={2371--2469},
   issn={0001-8708},
   review={\MR{2739781}},
   doi={10.1016/j.aim.2010.09.013},
}

\bib{leinster}{article}{
   author={Leinster, Tom},
   title={The Euler characteristic of a category},
   journal={Doc. Math.},
   volume={13},
   date={2008},
   pages={21--49},
   issn={1431-0635},
   review={\MR{2393085}},
}

\bib{moore}{article}{
   author={Moore, Eliakim Hastings},
   title={On the reciprocal of the general algebraic matrix},
   journal={Bull. Amer. Math. Soc.},
   volume={26},
   date={1920},
   number={9},
   pages={394--395},
}	

\bib{propparxiv}{article}{
      title={Euler measure as generalized cardinality}, 
      author={Propp, James},
      year={2002},
      journal={arXiv preprint math/0203289},
}

\bib{chen-vigneaux}{article}{
      title={A formula for the categorical magnitude in terms of the Moore-Penrose pseudoinverse}, 
      author={Chen, Stephanie},
      author={Vigneaux, Juan Pablo},
      year={2023},
      journal={arXiv preprint math/2303.12176v1},
}

\bib{propp}{article}{
   author={Propp, James},
   title={Exponentiation and Euler measure},
   note={Dedicated to the memory of Gian-Carlo Rota},
   journal={Algebra Universalis},
   volume={49},
   date={2003},
   number={4},
   pages={459--471},
   issn={0002-5240},
   review={\MR{2022349}},
   doi={10.1007/s00012-003-1817-1},
}

\bib{israel}{book}{
   author={Ben-Israel, Adi},
   author={Greville, Thomas N. E.},
   title={Generalized inverses},
   series={CMS Books in Mathematics/Ouvrages de Math\'{e}matiques de la SMC},
   volume={15},
   edition={2},
   note={Theory and applications},
   publisher={Springer-Verlag, New York},
   date={2003},
   pages={xvi+420},
   isbn={0-387-00293-6},
   review={\MR{1987382}},
}

\bib{chen}{article}{
   author={Sheng, Xingping},
   author={Chen, Guoliang},
   title={A note of computation for M-P inverse $A^\dagger$},
   journal={Int. J. Comput. Math.},
   volume={87},
   date={2010},
   number={10},
   pages={2235--2241},
   issn={0020-7160},
   review={\MR{2680143}},
   doi={10.1080/00207160802624117},
}

\bib{murray-lasso}{article}{
author={Murray-Lasso, Marco},
title={Alternative methods of calculation of the pseudo inverse of a non full-rank matrix}
journal={Journal of Applied Research and Technology},
volume={6},
date={2008},
number={3},
pages={},
doi={10.22201/icat.16656423.2008.6.03.517}
}

\bib{davis}{book}{
   author={Davis, Philip J.},
   title={Interpolation and approximation},
   note={Republication, with minor corrections, of the 1963 original, with a
   new preface and bibliography},
   publisher={Dover Publications, Inc., New York},
   date={1975},
   pages={xv+393},
   review={\MR{0380189}},
}

\bib{ji}{article}{
   author={Ji, Jun},
   title={Gauss-Jordan elimination methods for the Moore-Penrose inverse of
   a matrix},
   journal={Linear Algebra Appl.},
   volume={437},
   date={2012},
   number={7},
   pages={1835--1844},
   issn={0024-3795},
   review={\MR{2946363}},
   doi={10.1016/j.laa.2012.05.017},
}

\end{biblist}
	
\end{bibdiv}

\end{document}